\documentclass[english, 12pt]{amsart}
\usepackage{babel}
\usepackage[]{geometry}

\usepackage{caption}
\usepackage{subcaption}
\usepackage{amsmath,amssymb,amsfonts,latexsym}
\usepackage{graphicx}
\usepackage{xcolor}
\usepackage{etoolbox}
\usepackage{placeins}
\usepackage{needspace}
\usepackage{tikz}
\usepackage{booktabs}

\newcommand{\R}{\mathbb{R}}
\newcommand{\N}{\mathbb{N}}

\newcommand{\jump}[1]{[\![ #1 ]\!]}

\def\1{\raisebox{2pt}{\rm{$\chi$}}}

\theoremstyle{plain}

\newtheorem{proposition}{Proposition}

\newtheorem{theorem}{Theorem}
\newtheorem{corollary}{Corollary}

\theoremstyle{definition}
\newtheorem{example}{Example}

\theoremstyle{remark}

\author{C.~Esteve-Yag\"ue}
\address{\textbf{Carlos Esteve-Yag\"ue} \newline
Departamento de Matem\'aticas, Universidad de Alicante, Spain}
\email{c.esteve@ua.es}

\author{R.~Tsai}
\address{\textbf{Richard Tsai} \newline
Department of Mathematics and Oden Institute, The University of Texas at Austin, USA}
\email{ytsai@math.utexas.edu}

\author{S.~Osher}
\address{\textbf{Stanley Osher} \newline
Department of Mathematics, UCLA, USA.}
\email{sjo@math.ucla.edu}

\subjclass[2020]{35A15, 65K10, 65N12, 68T07}
\keywords{least squares, physics-informed neural networks, residual minimization,
spurious critical points, moving interfaces, parameterized trial families}

\usepackage{hyperref}

\newcommand{\nb}[3]{
{\colorbox{#2}{\bfseries\sffamily\scriptsize\textcolor{white}{#1}}}
{\textcolor{#2}{\sf\small$\blacktriangleright$\textit{#3}$\blacktriangleleft$}}}
\newcommand{\carlos} [1]{\nb{Carlos}{white!30!purple}{#1}}
\newcommand{\richard}[1]{\nb{Richard}{red!30!green}{#1}}

\newif\ifmanuscriptreview
\ifdefined\ManuscriptClean
  \manuscriptreviewfalse
\else
  \manuscriptreviewtrue
\fi
\renewcommand{\carlos}[1]{}
\renewcommand{\richard}[1]{}

\begin{document}

\title[Least-squares residual minimization for differential equations]{When does least-squares residual minimization solve a differential equation?}

\date{\today}

\begin{abstract}
Least-squares residual minimization approximates a differential equation through an optimization problem. This paper investigates the conditions required to transition from stationary points and minimizing sequences of the residual functional to the exact solution. Starting from the classical orthogonality relation between the residual and the range of the linearized operator, we study how boundary conditions and parameterized variations 
determine what can be inferred about the residual at a stationary point. Smooth spurious critical points can occur even for well-posed problems; for linear equations, a compatibility condition yields an exact projection characterization. {For} a scalar conservation law with transonic boundary data,
every minimizing sequence converges to a continuous function that
does not solve the equation, even when a stationary entropy shock
exists. We give conditions under which approaching this limit
forces the parameter norms to diverge. A Hamilton--Jacobi example
shows that a function can have zero residual without being the
viscosity solution. Favorable results for a viscous model and uniformly convex Monge--Amp\`ere critical points identify conditions that restore the corresponding implications. {For piecewise-smooth trial families, the regularity needed to define the residual functional does not by itself justify the sampled gradient used in training. Moving residual jumps can contribute terms absent from that gradient; an explicit example has identically vanishing sampled gradients and a nonzero continuous derivative.} {Together, these results clarify what stationarity and sampled gradients
imply about the residual, and under which conditions minimizing sequences
lead to the intended solution.}
\end{abstract}

\maketitle


\section{Introduction}\label{sec:intro}

Least-squares residual minimization provides a common approach to approximating solutions of differential equations. In physics-informed neural networks (PINNs) and related methods, the approximation is sought in a parameterized family of trial functions, such as neural networks. The residual functional is estimated from point samples. A gradient-based optimization method adjusts the parameters to minimize this sampled functional~\cite{bochev1998finite,raissi2019physics}.
Interpreting the resulting approximation requires relating properties of this constrained optimization problem to the differential equation and its intended solution.

This paper asks under what conditions stationarity implies zero residual, and under what conditions minimizing sequences converge to the intended solution of the differential equation. {The stationarity question arises already for the exact residual functional with all admissible variations available}: well-posedness of the differential equation does not by itself ensure that every critical point has zero residual. Parameterization and sampling introduce further questions about which variations are tested and whether the computed gradient is consistent with the first variation of the continuous loss.

These are distinct implications. An estimate that converts a small residual into a small solution error does not by itself show that a small parameter gradient implies a small residual. Accurate estimates of the residual functional do not guarantee accurate estimates of its gradient. Even when an optimization method produces a minimizing sequence, the corresponding trial functions may fail to converge to a function in the admissible class. We organize the analysis around three questions:

\medskip\par\noindent\rule{\textwidth}{0.4pt}
\noindent \textbf{Q1:} \emph{When does a critical point of the continuous least-squares functional necessarily solve the differential equation?}

\noindent \textbf{Q2:} \emph{At a critical point, how does the residual look if it is nonzero?}

\noindent \textbf{Q3:} \emph{Does driving the residual loss down to its infimum necessarily lead to the intended solution?}
\par\noindent\rule{\textwidth}{0.4pt}\par\medskip
For Q1 and Q2 we examine both the full admissible variation space and the variations generated by a parameterization, and distinguish the continuous first variation from the gradient of the sampled loss.
For Q3, we examine whether minimizing sequences converge within the admissible class and whether their limits give the intended solution.

\subsection*{Two motivating examples}
Two elliptic boundary-value problems separate the continuum stationarity question from the questions introduced by parameterization and sampling.

The question of whether a continuous critical point necessarily solves the differential equation already has a negative answer for a simple, well-posed one-dimensional problem.
 Consider
\begin{equation}\label{eq:neumann-semilinear}
-u''+u^3=1\quad\text{in }(0,1),
\qquad
u'(0)=u'(1)=0,
\end{equation}
and define the least-squares residual loss as
\[
J(u):=\frac12\int_0^1(-u''+u^3-1)^2\,dx,
\qquad
\forall u\in X:=\{u\in H^2(0,1):u'(0)=u'(1)=0\}.
\]
Clearly, the constant function $u^*\equiv1$ is a solution of \eqref{eq:neumann-semilinear} and a global minimum of $J$. If $u_1$ and $u_2$ are
two solutions and $w=u_1-u_2$, multiplication of the difference equation
by $w$ gives
\[
\int_0^1|w'|^2\,dx
+
\int_0^1
(u_1-u_2)^2(u_1^2+u_1u_2+u_2^2)\,dx
=0,
\]
and hence $u_1=u_2$. More generally, the Neumann operator
$u\mapsto -u''+u^3$ is coercive and strictly monotone in the natural
variational setting, so the problem is well posed under perturbations of
the forcing.

Taking the first variation of $J$ at $u$, with an admissible test function $v\in X$, we derive  
\begin{equation*}
    DJ(u)[v] = \int_0^1 (-u'' + u^3 - 1)(-v'' + 3u^2v) \, dx = \bigl\langle -u'' + u^3 - 1, -v'' + 3u^2v \bigr\rangle_{L^2(0,1)}.
\end{equation*}
We see that $\bar u\equiv0$ is a smooth critical point of $J$ with $J(\bar u)=1/2$, \emph{and it does not solve the differential equation.}
Indeed, every admissible variation $v\in X$ satisfies
\[
DJ(0)[v]
=
\int_0^1(-1)(-v'')\,dx
=
v'(1)-v'(0)
=
0.
\]
Thus the exact functional $J(u)$ can be positive at a smooth critical point.

At $u=0$, the derivative of the cubic term vanishes, leaving the linearized operator $v\mapsto-v''$. The Neumann boundary conditions ensure that $-v''$ has zero mean and is therefore orthogonal to the constant residual $-1$.

This orthogonality condition is closely related to the Fredholm alternative. At a critical point, the residual belongs to the null space of the adjoint of the linearized operator, with boundary conditions determined by the admissible variations. 
The corresponding adjoint equation and boundary conditions are understood weakly when the residual lacks sufficient regularity for a classical interpretation.
Studying this null space therefore provides conditions under which stationarity implies zero residual and describes the possible nonzero residuals when those conditions fail.
A parameterization of the trial functions further restricts the available variations. The question is whether stationarity under this restriction still forces the residual to vanish.

The second example concerns the Dirichlet--Poisson problem
\begin{equation}\label{eq:model-Poisson-problem}
-u''=1\quad\text{in }(-a,a),\qquad u(\pm a)=0.
\end{equation}
For $J(u)=\tfrac12\int_{-a}^a(-u''-1)^2\,dx$, every critical point in $X:=H^2(-a,a)\cap H_0^1(-a,a)$ solves the equation. 
Indeed, if $u\in X$ is a critical point of $J$, then
$$
DJ(u)[v] = \left\langle -u'' -1, \, -v'' \right\rangle_{L^2(-a,a)} = 0, \quad \forall v\in X.
$$
Since the Dirichlet--Laplace operator reaches every $L^2$ right-hand side, it holds that $\left\langle -u'' -1, \, g \right\rangle_{L^2(-a,a)} = 0$ for all $g\in L^2(-a,a)$, implying $-u'' = 1$. Nevertheless, training a parameterized model with sampled gradients can reach positive-loss plateaus. Figure~\ref{fig:elu-results} illustrates this behavior for an ELU network; the architecture and training protocol are given in the caption.

\begin{figure}[!htbp]
\centering
\begin{tabular}{cc}
\includegraphics[width=0.42\textwidth]{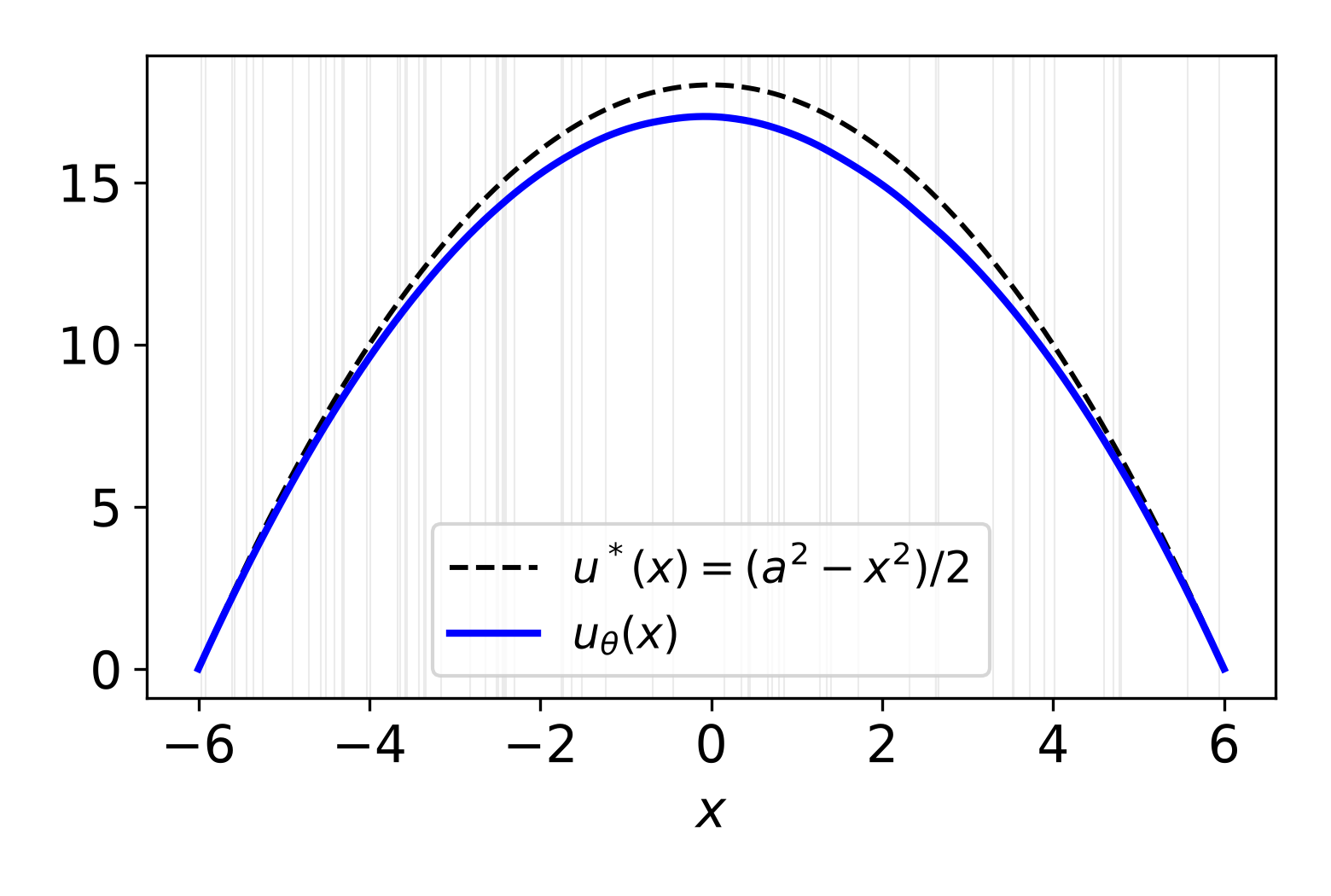}&
\includegraphics[width=0.42\textwidth]{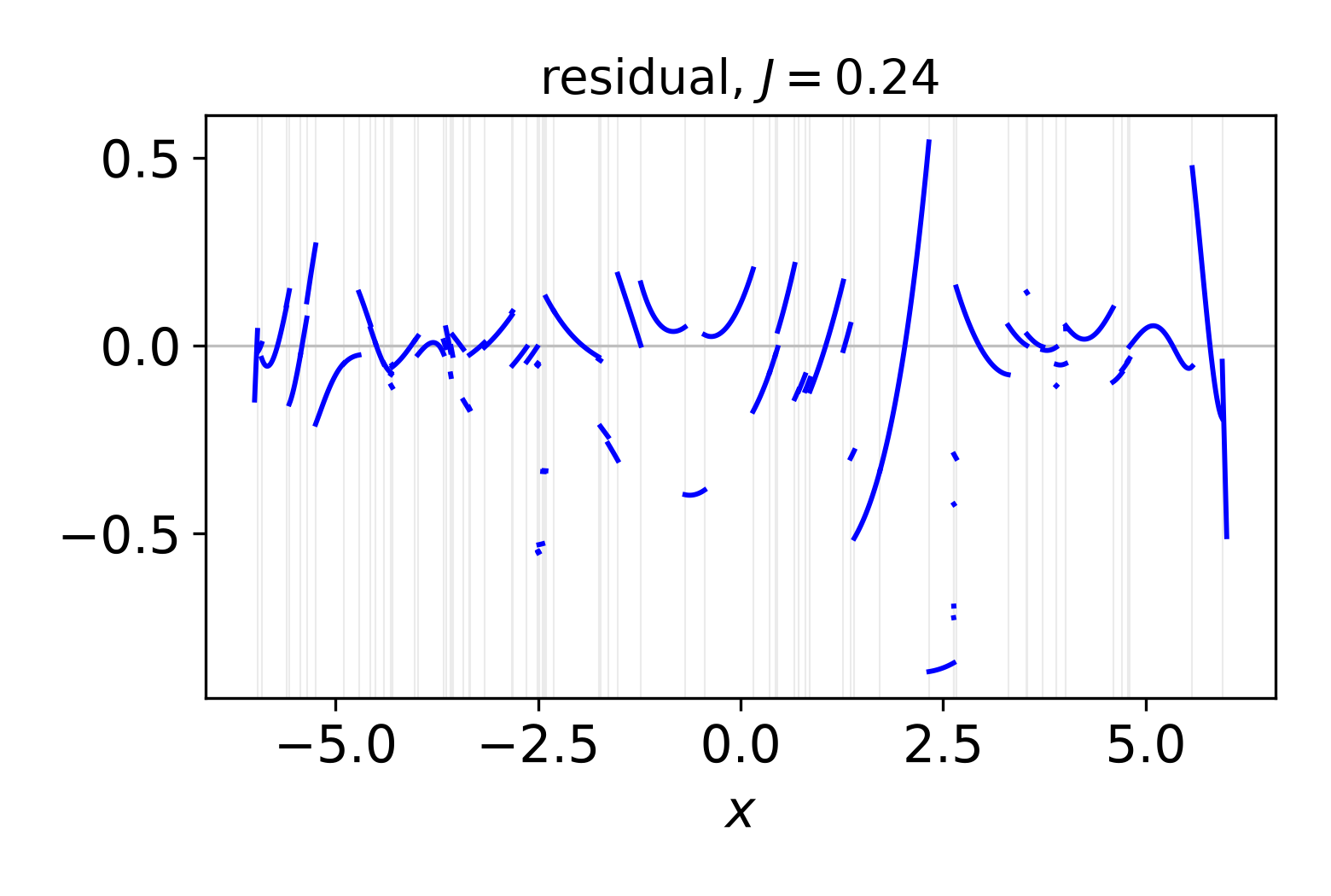}
\end{tabular}
\caption{The Dirichlet Poisson problem with $a=6$. The trial function is $u_\theta(x)=\tanh(x+a)\tanh(a-x)\Psi_\theta(x)$, where $\Psi_\theta$ is an ELU network with two hidden layers of width $50$, so the boundary conditions hold exactly. Training uses Adam on the sampled gradient with ten uniformly resampled points per iteration and learning rate $10^{-3}$ for $50{,}000$ iterations, then $10^{-4}$ and $10^{-5}$ for $25{,}000$ iterations each. Left: the result for seed $3$ after $10^5$ iterations, compared with the exact solution. Right: the residual, with the interfaces of the network, the points where $u_\theta''$ jumps, marked by light vertical lines. The independently evaluated integral loss is approximately $0.241$. }
\label{fig:elu-results}
\end{figure}

The favorable function-space result does not identify the cause of a training plateau. One must also examine the variations generated by the parameterized functions (neural networks, for example) and the relation between the sampled gradient and the continuous first variation. Section~\ref{sec:piecewise-training} derives the interface contribution that can separate these gradients and gives an exact analytical example.

\FloatBarrier

\subsection*{Relation to existing work and contributions}

Normal equations and residual orthogonality are standard tools in classical least-squares methods, see e.g.~\cite{bochev1998finite}. In the PINNs literature, analyses of optimization failures and stiffness~\cite{krishnapriyan2021characterizing,wang2021understanding}, convergence of minimizers under regularity assumptions~\cite{Shin-et-alconvergence}, {error estimates in terms of training and quadrature errors~\cite{mishra2023generalization}}, and {conditions under which stationarity implies zero residual at fixed collocation points}~\cite{hosseiniDashtbayaz2024critical} address several of the implications considered here. {For scalar hyperbolic conservation laws, weak PINNs use residuals based on entropy inequalities, with error bounds relative to the entropy solution~\cite{deryck2024wpinns}.} Related work identifies nonuniqueness arising from finite information~\cite{langer2026nonuniqueness}, error control from sampled data~\cite{bonito2026consistent}, and parameter escape despite coercivity of a function-space energy~\cite{zuazua2026coercivity}. The distinction between interior and boundary contributions to gradients of integrals with parameter-dependent discontinuities also appears in Monte Carlo gradient estimation~\cite{lee2018reparameterization}.

Our contribution is a unified analysis of the implications needed to pass from residual optimization to the intended solution of the differential equation. Classical range and adjoint characterizations identify what stationarity tests. We follow how this information depends on the admissible boundary variations, the parameterization, and the regularity required to relate pointwise differentiation to the continuous first variation. We then examine whether minimizing sequences have convergent subsequences whose limits retain the required regularity, and whether these limits represent the intended solution. These questions are related, but their obstructions are distinct: a range condition addresses stationarity, whereas attainment and solution selection require further arguments.

The main results develop these connections in several settings:
\begin{itemize}
    \item For linear equations, we obtain an exact projection characterization
    of critical residuals under a compatibility condition on the
    parameterization of the trial functions. This characterization makes
    explicit what stationarity implies about the residual.

    \item {For stationary scalar conservation laws, minimizing the strong
    residual can fail to recover an entropy solution even in the limit.
    With a strictly convex flux and transonic boundary data, every
    minimizing sequence converges uniformly to the same continuous
    profile, which lies outside $H^1$ and does not solve the equation.
    This remains true when the boundary fluxes agree and an
    entropy-admissible stationary shock exists. The infimum is strictly
    positive and is not attained. The loss of regularity forces parameter
    escape whenever bounded parameter sets give bounded $H^1$ seminorms.
    For the corresponding equation with fixed positive viscosity,
    bounds in terms of the residual instead yield strong $H^2$ convergence
    to the solution as the residual tends to zero.}

    \item The fully nonlinear examples address both stationarity and
    solution admissibility. The Monge--Amp\`ere analysis in section \ref{subsec:monge-ampere} gives conditions
    under which stationarity forces the residual to vanish, while the
    Hamilton--Jacobi examples in section \ref{subsec:hj-ae} show why an additional admissibility
    condition can be necessary to identify the intended solution even
    when the residual is zero.
    \item {For piecewise-smooth trial families, we show that the regularity
    needed to define the residual functional does not by itself justify
    the sampled gradient used in training. We identify contributions from
    moving residual jumps that are absent from this gradient; the resulting
    bias is independent of the sample size. We also give a cellwise adjoint
    identity for smooth nonlinear second-order operators, including fully
    nonlinear equations. It describes the boundary and interface conditions
    on critical residuals, with a more explicit characterization for
    equations in divergence form. An explicit Poisson example shows that
    the sampled gradient can vanish identically while the continuous
    derivative is nonzero.}

\end{itemize}
Together, these results show why resolving one of these questions does not by itself resolve the others. More accurate sampling does not remove a function-space spurious critical point; approximation capacity does not determine the image of the admissible variations; and using the exact continuous gradient does not ensure that
minimizing sequences converge to an admissible solution. The favorable results specify conditions under which the corresponding implications do hold.

{Figure~\ref{fig:implication-chain} shows the chain of implications and where each link is established or broken.}

\begin{figure}[!htbp]
\centering
\captionsetup{font=footnotesize}
\begin{tikzpicture}[x=\linewidth, y=1pt,
  box/.style={draw, line width=0.4pt, inner sep=3pt, align=center,
              font=\footnotesize, text width=0.155\linewidth, minimum height=26pt},
  lab/.style={align=center, font=\scriptsize, inner sep=1pt},
  arr/.style={-stealth, line width=0.5pt}
]
\node[box] (b1) at (0.0875,0) {sampled\\gradient\\vanishes};
\node[box] (b2) at (0.29375,0) {continuous\\gradient\\vanishes};
\node[box] (b3) at (0.50,0) {range\\orthogonality};
\node[box] (b4) at (0.70625,0) {zero\\residual};
\node[box] (b5) at (0.9125,0) {intended\\solution};
\node[box] (b0) at (0.29375,-92) {minimizing\\sequence};
\draw[arr] (b1) -- (b2);
\draw[arr] (b2) -- (b3);
\draw[arr] (b3) -- (b4);
\draw[arr] (b4) -- (b5);
\draw[arr] (b0.east) -| (b4.south);
\node[lab, anchor=south] at (0.190625,20) {Proposition~\ref{prop:sampled-consistency}(iii)};
\node[lab, anchor=north] at (0.190625,-20) {Proposition~\ref{prop:sgd-bias}\\Example~\ref{ex:zero-sampled-gradient}};
\node[lab, anchor=south] at (0.396875,20) {Proposition~\ref{prop:parametric-fredholm}};
\node[lab, anchor=south] at (0.603125,20) {Corollary~\ref{cor:surjectivity} (dense image)\\Corollary~\ref{cor:linear-exactness} (linear)\\Proposition~\ref{prop:viscous-inviscid}(i)\\Section~\ref{subsec:monge-ampere}};
\node[lab, anchor=north] at (0.603125,-20) {Section~\ref{sec:intro}\\Proposition~\ref{prop:viscous-inviscid}(ii)\\Section~\ref{subsec:monge-ampere}\\(degenerate)};
\node[lab, anchor=south] at (0.809375,20) {Sections~\ref{subsec:monge-ampere}, \ref{subsec:hj-ae}};
\node[lab, anchor=north] at (0.809375,-20) {Section~\ref{subsec:hj-ae}};
\node[lab, anchor=south] at (0.47,-89) {Proposition~\ref{prop:viscous-confinement}};
\node[lab, anchor=north] at (0.47,-95) {Theorem~\ref{thm:burgers-cusp}\\Proposition~\ref{prop:reverse-transonic}};
\end{tikzpicture}
\caption{{The implications studied in this paper. Each box is a condition and each arrow a separate implication. Above an arrow: the results that establish it, each under its own hypotheses. Below: examples in which it fails. Corollary~\ref{cor:surjectivity} requires a dense image, which a finite-dimensional $\mathcal V_\theta$ cannot supply in $L^2$; the first arrow concerns the large-sample limit of the sampled gradient. The lower branch is the minimizing-sequence route of Section~\ref{sec:escape}, which bypasses stationarity.}}
\label{fig:implication-chain}
\end{figure}
\FloatBarrier

\section{Residual calculus and parameterized trial functions}
\label{sec:theory}

We first formulate the stationarity question for an exact residual functional. We then restrict the admissible variations to a parameterized family of trial functions (the hypothesis set) and ask whether differentiating a sampled loss recovers the resulting continuous first variation. The residual function, residual map, and residual space are distinguished because the differentiability needed for these steps depends on the chosen function spaces.

\subsection{Residual functions, residual maps, and residual spaces}
\label{subsec:tiers}

For some $k\in\mathbb{N}$ and a bounded domain $\Omega\subset\mathbb{R}^d$, let
$F\colon C^k(\overline{\Omega})\to C(\overline{\Omega})$ be a differential operator
which, to every sufficiently smooth function $u$, associates a continuous function
denoted by $x\mapsto F(x;u)$. We consider the equation $F(\cdot;u)=0$ in $\Omega$,
complemented with boundary conditions. For example, for a second-order equation on a
bounded interval $\Omega=(a,b)$, the differential operator is given by a continuous
function $\mathcal F\colon\overline{\Omega}\times\mathbb{R}^3\to\mathbb{R}$, so that
\begin{equation}\label{eq:F2}
  F(x;u)=\mathcal F\bigl(x,u(x),u'(x),u''(x)\bigr),\qquad
  \forall x\in\overline{\Omega},\ \forall u\in C^2(\overline{\Omega}).
\end{equation}

We write $DF(u)[v]$ for the linearization of $F$ at $u$ and $DF(u)^*$ for its formal adjoint. 
For $F$ given by \eqref{eq:F2} with $\mathcal F=\mathcal F(x,z,p,q)$ of class $C^1$, these are
\begin{equation}\label{eq:formal-linearization}
  DF(u)[v]=\mathcal F_q\,v''+\mathcal F_p\,v'+\mathcal F_z\,v,
  \qquad
  DF(u)^*r=(\mathcal F_q\,r)''-(\mathcal F_p\,r)'+\mathcal F_z\,r,
\end{equation}
with all coefficients evaluated at $(x,u(x),u'(x),u''(x))$.

The associated \emph{residual map}, denoted by $\mathcal{R}\colon X\to Y$, is given by
\[
  \mathcal{R}(u):=F(\cdot;u),\qquad \forall u\in X,
\]
where $X$ is a Banach space and $Y$ is a Hilbert space, chosen so that $F(\cdot;u)$
makes sense and belongs to $Y$ for every $u\in X$. If
$X\not\subset C^k(\overline{\Omega})$, then $F(\cdot;u)$ is defined a.e.\ by the same
formula, with weak derivatives. 

The choice of $X$ and $Y$ depends on the problem; for
instance, homogeneous Dirichlet conditions can be built into $X$. {When nonhomogeneous boundary data are imposed exactly, we let $X$ carry the homogeneous form of the boundary conditions and fix a function $u_b$ satisfying the data and having the regularity required of elements of $X$; the trial functions then range over the affine space $u_b+X$, the residual map is defined on $u_b+X$, and the condition $\mathcal{R}\in C^1(X;Y)$ below is understood for the map $w\mapsto\mathcal{R}(u_b+w)$. In both cases the admissible variations are the elements of $X$.} We call $Y$ the
\emph{residual space}, and the associated \emph{least-squares loss functional} is
given by
\[
  J(u):=\frac12\|\mathcal{R}(u)\|_Y^2 .
\]
{We call $r=\mathcal{R}(u)\in Y$ the \emph{residual function} of $u$.}
In the standard setting we consider $Y=L^2(\Omega)$, although other norms can be
considered, for instance by replacing the Lebesgue measure on $\Omega$ by a different
measure $\mu$. If $\mu$ is not absolutely continuous with respect to the Lebesgue
measure, e.g.\ an empirical measure on collocation points, then $F(\cdot;u)$ must be
defined $\mu$-a.e., which requires more regularity, for instance
$X\hookrightarrow C^k(\overline{\Omega})$.

Let us assume that $\mathcal{R}\in C^1(X;Y)$. This may be guaranteed
by a suitable choice of $X$ and $Y$. For instance, for the equation $F(\cdot;u)=0$
with $F$ given by \eqref{eq:F2}, homogeneous Dirichlet conditions and
$Y=L^2(\Omega)$: if $\mathcal F\in C^1(\overline{\Omega}\times\mathbb{R}^3)$, then $\mathcal{R}\in C^1(X;Y)$ is guaranteed by choosing
$X=W^{2,\infty}(\Omega)\cap H^1_0(\Omega)$. If moreover $\mathcal F$ is quasilinear, i.e.\
affine with respect to $u''$, one can simply take
$X=H^2(\Omega)\cap H^1_0(\Omega)$.

For any $u\in X$, we denote by $D\mathcal{R}(u)\in\mathcal{L}(X,Y)$ the Fr\'echet
derivative of $\mathcal{R}$ at $u$, and by $D\mathcal{R}(u)^*\in\mathcal{L}(Y,X')$
its adjoint, defined by
$$
\langle D\mathcal{R}(u)^*r,v\rangle_{X',X}:=\langle r,D\mathcal{R}(u)[v]\rangle_Y \quad \text{for all $r\in Y$
and $v\in X$.}
$$
{When $F$ is of the form \eqref{eq:F2} and $\mathcal{R}\in C^1(X;Y)$, the Fr\'echet derivative is given by the linearization \eqref{eq:formal-linearization}: $D\mathcal{R}(u)[v]=DF(u)[v]$ for all $v\in X$.} By the chain rule, the loss functional $u\mapsto J(u)$ is Fr\'echet differentiable, with
\[
  DJ(u)[v]=\bigl\langle\mathcal{R}(u),D\mathcal{R}(u)[v]\bigr\rangle_Y
  =\bigl\langle D\mathcal{R}(u)^*\mathcal{R}(u),v\bigr\rangle_{X',X},
  \qquad\forall v\in X.
\]
Hence, in the unconstrained optimization framework, $\bar u\in X$ is a critical
point of $J$ if and only if
\begin{equation}\label{eq:crit}
  \mathcal{R}(\bar u)\in
  \bigl(D\mathcal{R}(\bar u)[X]\bigr)^{\perp}
  =\ker D\mathcal{R}(\bar u)^*,
\end{equation}
where $D\mathcal{R}(\bar u)[X]$ denotes the range of $D\mathcal{R}(\bar u)$ and
$\perp$ the orthogonal complement in $Y$. Condition \eqref{eq:crit} gives two equivalent routes to address both
questions Q1 and Q2 presented in the introduction.
For  Q1, the primal route asks whether the linearized test problem
$D\mathcal R(\bar u)[v]=g$, with $v\in X$, is solvable for a dense set
of $g\in Y$. The equivalent dual route asks whether the homogeneous
adjoint problem $D\mathcal R(\bar u)^*r=0$, with $r\in Y$, has only
the trivial solution, that is, $\ker D\mathcal R(\bar u)^*=\{0\}$.
Either condition implies $\mathcal R(\bar u)=0$ by \eqref{eq:crit}.
If this holds at every admissible $u$, then every critical point of $J$
solves the equation.

Regarding Q2, a nonzero residual at a critical point $\bar u$
must be a nontrivial solution $r\in Y$ of the linear equation
$D\mathcal{R}(\bar u)^*r=0$. Since $\bar u$ is itself unknown, this is a necessary
condition rather than a characterization, and in general not every element of
$\ker D\mathcal{R}(\bar u)^*$ occurs as a residual.

{Boundary conditions can also be included through a least-squares
penalty, rather than imposing them into the choice of $X$. In this case, choose $X$ without imposing boundary conditions and
assume $\mathcal R\in C^1(X;Y)$. Let $\mathcal B\in C^1(X;Z)$ be
the boundary operator, with data $g\in Z$ in a Hilbert space $Z$.
For a fixed $\beta>0$, equip
$\widetilde Y:=Y\times Z$ with the weighted inner product
\[
\langle(r,b),(s,c)\rangle_{\widetilde Y}
:=\langle r,s\rangle_Y+\beta\langle b,c\rangle_Z.
\]
The augmented residual
$\widetilde{\mathcal R}(u):=(\mathcal R(u),\mathcal B(u)-g)$ then gives
\[
J_\beta(u)
=\frac12\|\widetilde{\mathcal R}(u)\|_{\widetilde Y}^2
=\frac12\|\mathcal R(u)\|_Y^2
+\frac{\beta}{2}\|\mathcal B(u)-g\|_Z^2.
\]
The chain rule and characterization~\eqref{eq:crit} apply to
$\widetilde{\mathcal R}$, with orthogonality and the adjoint taken in
this weighted product space. Stationarity now requires the sum of the
interior and boundary pairings to vanish for every admissible variation;
it does not separately force each pairing to vanish.}

In practice, however, two main obstacles have to be taken into account. On one hand, the search for the solution is constrained to a parameterized family of functions $\mathcal{U} \subset X$, typically with a finite number of parameters. On the other hand, the functional $J(u)$ or its gradient cannot be computed exactly; instead, it is only estimated by random sampling.

\subsection{Parameterized functions and admissible variations}
\label{subsec:parametric-functions}
Let us focus on the first obstacle, which is the parameterization constraint.
For $N\in \N$ being the number of parameters, we consider a $C^1$ map
$$
\theta \in \R^N \longmapsto u_\theta(\cdot) \in X
$$
{(into $u_b+X$ when nonhomogeneous boundary data are imposed exactly).}
This means in particular that the partial derivative $w_k:=\partial_{\theta_k} u_\theta(\cdot)$ belongs to $X$, for each parameter $\theta_k$.
The optimization problem is then constrained to the family of functions
\begin{equation}
\label{space of parametrized functions}
\mathcal{U} := \left\{ u_\theta (\cdot)  \quad : \quad \theta = (\theta_k)_{k=1}^N \in \mathbb{R}^{N} \right\}.
\end{equation}
In the deep learning literature, $\mathcal{U}$ is often referred to as \emph{the hypothesis set}.

For any $\theta \in \R^N$, we define \emph{the space of admissible variations} as
\begin{equation}
\label{admissible variations}
\mathcal{V}_\theta := \operatorname{span} \left\{ w_k=\partial_{\theta_k} u_\theta (\cdot) \ : \quad k = 1, \ldots, N \right\} \subset X
\end{equation}
Note that the linear space $\mathcal{V}_\theta\subset X$ is well defined due to the differentiability of the map $\theta \mapsto u_\theta$.

In the particular case of a linear parameterization, where $u_\theta := \sum_{k=1}^N \theta_k \phi_k$ for a given finite set of functions $\{\phi_k\}_{k=1}^N\subset X$, the space of admissible variations is independent of the parameter $\theta$, and given by $\mathcal{V}_\theta = \operatorname{span} \{ \phi_k \ : \ k=1,\ldots, N \}$ for every $\theta$.
In the neural network framework, where $\theta \mapsto u_\theta$ is non-linear, the space of admissible variations varies with the parameter $\theta$. Note that $\mathcal{V}_\theta$ is a feature of the parameter, not of the function $u_\theta$. It might happen that for two parameters $\theta\neq \theta'$, the associated function is the same ($u_\theta = u_{\theta'}$), while the corresponding spaces of admissible variations are different ($\mathcal{V}_\theta \neq \mathcal{V}_{\theta'}$).

For the least-squares loss functional $J(u_\theta)= \frac{1}{2}\| \mathcal{R}(u_\theta) \|_Y^2$, we have the following critical point condition, relating the residual $\mathcal{R}(u_\theta)$ and the range of the linearization $D\mathcal{R}(u_\theta)$, restricted to the space of admissible variations.

\begin{proposition}[Parametric range orthogonality]\label{prop:parametric-fredholm}
Let $X$ be a Banach space and $Y$ a Hilbert space, and let $\mathcal{R}\in C^1(X; Y)$ be a given residual map. Consider the loss functional $J(u) = \frac{1}{2}\| \mathcal{R}(u) \|_Y^2$. Let $\mathcal{U}\subset X$ be a parameterized family of functions as in \eqref{space of parametrized functions}, and for each $\theta\in \R^N$, let $\mathcal{V}_\theta$ be the space of admissible variations defined in \eqref{admissible variations}. If \(\theta^*\) is a
critical point of \(\theta \mapsto J(u_\theta)\), then
\begin{equation}\label{eq:parametric-range-orthogonality}
\mathcal R(u_{\theta^*})
\in
\bigl(D\mathcal R(u_{\theta^*})[\mathcal V_{\theta^*}]\bigr)^\perp.
\end{equation}
\end{proposition}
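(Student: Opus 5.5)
The argument is a direct application of the chain rule to the composition $\theta\mapsto u_\theta\mapsto \mathcal R(u_\theta)\mapsto J(u_\theta)$, followed by linearity in the admissible variations. Set $\Phi(\theta):=J(u_\theta)=\frac12\|\mathcal R(u_\theta)\|_Y^2$. The map $\theta\mapsto u_\theta$ is $C^1$ from $\R^N$ into $X$, and $\mathcal R\in C^1(X;Y)$. Hence $\theta\mapsto\mathcal R(u_\theta)$ is $C^1$ from $\R^N$ into $Y$, and its partial derivatives are
\[
\partial_{\theta_k}\mathcal R(u_\theta)=D\mathcal R(u_\theta)[w_k],\qquad w_k=\partial_{\theta_k}u_\theta\in X .
\]
In the affine case $u_\theta\in u_b+X$, I will apply the same reasoning to $w\mapsto\mathcal R(u_b+w)$. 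There $\partial_{\theta_k}u_\theta$ still lies in $X$, so nothing changes.

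Next I differentiate the squared Hilbert norm. Since $y\mapsto\frac12\|y\|_Y^2$ is Fréchet differentiable with derivative $h\mapsto\langle y,h\rangle_Y$, the chain rule gives
\[
\partial_{\theta_k}\Phi(\theta)=\bigl\langle \mathcal R(u_\theta),D\mathcal R(u_\theta)[w_k]\bigr\rangle_Y
= DJ(u_\theta)[w_k].
\]
This is simply the formula for $DJ(u)[v]$ established above, evaluated at $v=w_k$.

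At a critical point $\theta^*$, every partial derivative vanishes. So $\mathcal R(u_{\theta^*})$ is orthogonal in $Y$ to $D\mathcal R(u_{\theta^*})[w_k]$ for each $k=1,\dots,N$. Now take an arbitrary $v=\sum_k c_k w_k\in\mathcal V_{\theta^*}$. Because $D\mathcal R(u_{\theta^*})$ is linear and the inner product is bilinear,
\[
\bigl\langle\mathcal R(u_{\theta^*}),D\mathcal R(u_{\theta^*})[v]\bigr\rangle_Y=\sum_k c_k\,\partial_{\theta_k}\Phi(\theta^*)=0 .
\]
Therefore $\mathcal R(u_{\theta^*})$ is orthogonal to the whole subspace $D\mathcal R(u_{\theta^*})[\mathcal V_{\theta^*}]$, which is exactly \eqref{eq:parametric-range-orthogonality}. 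This subspace is finite-dimensional and hence closed, although closedness is not needed here.

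There is no substantial obstacle. The only point requiring care is the justification of the chain rule, namely that the composition of a $C^1$ map $\R^N\to X$ with a $C^1$ (Fréchet) map $X\to Y$ is differentiable with the expected derivative. This is standard and follows directly from the stated hypotheses.
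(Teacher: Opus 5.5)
Your proof is correct and follows the same route as the paper's. You use the chain rule to obtain $\partial_{\theta_k}J(u_\theta)=\langle\mathcal R(u_\theta),D\mathcal R(u_\theta)[w_k]\rangle_Y$, note that it vanishes at $\theta^*$, and extend to all of $\mathcal V_{\theta^*}$ by linearity. Your explicit linearity step and your treatment of the affine case $u_b+X$ only spell out what the paper leaves implicit.
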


\begin{proof}
Since $\theta^\ast$ is a critical point, we have $\partial_{\theta_k} J(u_{\theta^\ast}) = 0$ for all $k =1,\ldots, N$.
By the chain rule, the partial derivative of $J(u_\theta)$ with respect to each parameter $\theta_k$ is given by
$$
\partial_{\theta_k} J(u_\theta)  =  \bigl\langle \mathcal R(u_\theta),D\mathcal R(u_\theta)[w_k]\bigr\rangle_Y = DJ (u_\theta) [w_k], \qquad \text{where} \ w_k = \partial_{\theta_k} u_\theta (\cdot).
$$
Then, in view of the definition of the linear space $\mathcal{V}_\theta$ in \eqref{admissible variations}, at any critical $\theta^\ast$ we have
\begin{equation}\label{eq:dJ_without_integration_by_parts}
DJ({u}_{\theta^\ast})[v]
=
\bigl\langle \mathcal R({u}_{\theta^\ast}),D\mathcal R({u}_{\theta^\ast})[v]\bigr\rangle_Y = 0,
\qquad \forall  v\in\mathcal V_{\theta^\ast}.
\end{equation}
In other words, at a critical point $\theta^\ast$, it holds that
\begin{equation}\label{eq:critical-point-basic}
\mathcal R({u}_{\theta^\ast})\perp D\mathcal R({u}_{\theta^\ast})[\mathcal V_{\theta^\ast}].
\end{equation}
\end{proof}

In view of this, we can give the following criterion to answer Q1, based on the range of the residual's differential restricted to the space of admissible variations.

\begin{corollary}[Dense-range criterion]\label{cor:surjectivity}
Under the assumptions of Proposition \ref{prop:parametric-fredholm},
if \(\theta^\ast\) is a critical point of \(\theta \mapsto J(u_\theta)\) and
\[
\overline{D\mathcal R({u}_{\theta^\ast})[\mathcal V_{\theta^\ast}]}=Y,
\]
then \(\mathcal R({u}_{\theta^\ast})=0\).
\end{corollary}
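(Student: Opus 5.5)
The plan is to combine Proposition~\ref{prop:parametric-fredholm} with an elementary Hilbert-space fact: the orthogonal complement of a subset coincides with the orthogonal complement of its closure.

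First, set $r:=\mathcal R(u_{\theta^\ast})\in Y$ and $W:=D\mathcal R(u_{\theta^\ast})[\mathcal V_{\theta^\ast}]\subset Y$. Since $\theta^\ast$ is a critical point, Proposition~\ref{prop:parametric-fredholm} gives $r\in W^\perp$, that is, $\langle r,g\rangle_Y=0$ for every $g\in W$.

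Next, extend this orthogonality from $W$ to its closure. Take any $g\in\overline{W}$ and a sequence $g_n\in W$ with $g_n\to g$ in $Y$. The inner product is continuous (Cauchy--Schwarz), so
\[
\langle r,g\rangle_Y=\lim_{n\to\infty}\langle r,g_n\rangle_Y=0 .
\]
Hence $r\in(\overline{W})^\perp$. By hypothesis $\overline{W}=Y$, so $r\perp Y$. In particular, choosing $g=r$ gives $\|r\|_Y^2=0$, and therefore $\mathcal R(u_{\theta^\ast})=0$.

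There is no real obstacle here. The only point requiring care is that density is imposed on the image $D\mathcal R(u_{\theta^\ast})[\mathcal V_{\theta^\ast}]$, not on $\mathcal V_{\theta^\ast}$ itself. The argument uses only this image, so linearity and boundedness of $D\mathcal R(u_{\theta^\ast})$ are not needed beyond what Proposition~\ref{prop:parametric-fredholm} already uses. When $Y$ is infinite-dimensional and $N<\infty$, the image $W$ is finite-dimensional and hence closed, so the hypothesis cannot hold in that case. The corollary is therefore really a statement about infinite-dimensional (or limiting) families of admissible variations, consistent with the remark in the caption of Figure~\ref{fig:implication-chain}.
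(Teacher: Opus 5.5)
Your argument is correct and is exactly the reasoning the paper intends. The paper gives no separate proof: the corollary is an immediate consequence of Proposition~\ref{prop:parametric-fredholm}, since the residual is orthogonal to the image, hence to its closure $Y$, hence zero. Your closing remark, that a finite-dimensional image is closed and so cannot be dense in an infinite-dimensional $Y$, matches the caveat the paper states right after the corollary.
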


{This criterion comes with an important caveat concerning the dimensionality. If one considers a finite number of parameters, the space $\mathcal{V}_\theta$ is finite-dimensional, and so is $D\mathcal R(u_\theta)[\mathcal V_\theta]$. However, if the chosen residual space is $Y= L^2(\Omega)$, as it is the standard choice in PINNs, the above density condition cannot hold due to $Y$ being infinite-dimensional. Hence, Corollary~\ref{cor:surjectivity} cannot be applied unless one considers a parameterized class of functions with an infinite number of parameters, or one reduces the residual space $Y$ to be finite dimensional.}

Note however that, in general, the finite number of parameters is not the only obstruction preventing the residual from vanishing.
The motivating Neumann problem \eqref{eq:neumann-semilinear} is precisely a failure of
{the full-space range condition discussed after \eqref{eq:crit}}, where the variations range over all of $X$. At \(u=0\), \(D\mathcal R(0)[v]=-v''\), and the
Neumann condition in the definition of $X$ forces that, for every $v\in X$, the function $g =-v''$
has mean zero. Therefore, any constant residual $r$ lies in the
orthogonal complement of the range of $D\mathcal{R}(0)$. In particular, $\mathcal{R}(0) = -1$ satisfies $\mathcal{R}(0) \perp D\mathcal{R}(0) [X]$.
This example shows how the boundary conditions restrict what stationarity can test.

If $\mathcal R$ is twice Fr\'echet differentiable, then we have
\begin{equation}\label{eq:hessian-residual-loss}
D^2J(u_\theta)[v,v]
=
\|D\mathcal R(u_\theta)[v]\|_Y^2
+
\bigl\langle \mathcal R(u_\theta),D^2\mathcal R(u_\theta)[v,v]\bigr\rangle_Y,
\end{equation}
along any admissible variation $v\in \mathcal{V}_\theta$.
The second term is sign-indefinite in general, so nonlinear residual maps can produce nonconvex least-squares functionals. None of the range arguments below uses \eqref{eq:hessian-residual-loss}; they rely only on the first-order identity \eqref{eq:dJ_without_integration_by_parts} and the geometry of $D\mathcal R(u_\theta)[\mathcal V_\theta]$.

An equivalent criticality condition can be obtained using the adjoint of $D\mathcal{R}({u}_\theta)$.
Write 
$$
T_\theta:=D\mathcal R({u}_\theta)|_{\mathcal V_\theta}:\mathcal V_\theta\to Y
$$ 
for the linearized
residual map restricted to the set of admissible variations $\mathcal{V}_\theta$, and
$T_\theta^*:Y\to\mathcal V_\theta'$ for its adjoint, so that
\[
\ker T_\theta^*=\{r\in Y:\langle r,T_\theta v\rangle_Y=0\text{ for all }v\in\mathcal V_\theta\}.
\]
Condition \eqref{eq:critical-point-basic} states that, at a critical point $\theta^*$, the residual
$r:=\mathcal R(u_{\theta^*})$ satisfies
\[
r\in(\operatorname{Ran}T_{\theta^*})^\perp=\ker T_{\theta^*}^*.
\]
The restricted adjoint $T_\theta^*r$ is a linear functional on
$\mathcal V_\theta$. When the regularity permits integration by parts,
its action on a variation is the sum of a volume pairing with the
formal adjoint $DF(u_\theta)^*r$ and boundary or interface pairings;
see Proposition~\ref{prop:piecewise-adjoint}. For finitely many parameter
directions, these contributions can cancel against each other within each parameter direction, so
$T_\theta^*r=0$ does not by itself imply the local adjoint equation or
separate boundary and interface conditions. The local equation follows
when the variation space contains arbitrary smooth tests compactly
supported in each cell. The remaining trace conditions follow from the
traces that can be varied independently, as in
Section~\ref{subsec:piecewise-setup}.

The primal and dual criteria for Q1 in Section~\ref{subsec:tiers}
apply here to the restricted operator $T_\theta$. 
For Q2, every critical residual belongs to $\ker T_\theta^*$,
but a nonzero element of this null space is not necessarily
realized as $\mathcal R(u_\theta)$ at a critical parameter.

A restricted space of admissible variations ($\mathcal V_{\theta^*}\subsetneq X$) does not automatically imply a weaker stationarity condition. The omitted variation directions might simply map to zero or duplicate directions already generated by $\mathcal V_{\theta^*}$. What truly matters is the resulting image in the residual space. This issue is distinct from approximation capacity. Even if a parameterized
family can approximate the exact solution arbitrarily well, the residual
can be nonzero at a parameter value where all parameter derivatives vanish.

\subsection{Sampled gradient}
\label{subsec:sampled-gradient}

{
Here we assume that the trial functions $u_\theta$ are sufficiently smooth so that the differential operator $F(\cdot; u_\theta)$ makes sense pointwise.
Write $r(x,\theta):=F(x;u_\theta)$ for the pointwise evaluation of the residual of the trial function $u_\theta$ at $x$. For $n$ points $\xi_1,\ldots,\xi_n$ sampled independently and uniformly from $\Omega$, the Monte Carlo approximation of the continuous residual functional is 
\begin{equation}\label{eq:sampled-loss}
 \hat J(\theta;\xi):=\frac{|\Omega|}{n}\sum_{j=1}^{n}\tfrac12\,r(\xi_j,\theta)^2,
 \end{equation}  and, whenever it exists, its parameter gradient with the sample locations held fixed
\begin{equation}\label{eq:sampled-gradient}
\nabla_\theta\hat J(\theta;\xi)
=\frac{|\Omega|}{n}\sum_{j=1}^{n} r(\xi_j,\theta)\,\nabla_\theta r(\xi_j,\theta),
\end{equation}
is referred to as the \emph{sampled gradient}.
The following proposition records the standard results on what the two estimators converge to, when the integrands are sufficiently smooth.

\begin{proposition}[Large-sample limits of the sampled loss and gradient]\label{prop:sampled-consistency}
Let $\Omega\subset\mathbb R^d$ be bounded, let $Y=L^2(\Omega)$, and let
$\xi_1,\dots,\xi_n$ be independent and uniformly distributed on $\Omega$. The following hold at each fixed parameter value $\theta$.
\begin{enumerate}
\item If $r(\cdot,\theta)\in L^2(\Omega)$, then
$\mathbb E[\hat J(\theta;\xi)]=J(\theta)$ and $\hat J(\theta;\xi)\to J(\theta)$
almost surely as $n\to\infty$.
\item If for almost every $x\in\Omega$, $r(x,\theta)$
is differentiable at $\theta$, and
$r(\cdot,\theta)\,\nabla_\theta r(\cdot,\theta)\in L^1(\Omega)$, then almost
surely $\hat J(\cdot\,;\xi)$ is differentiable at $\theta$, with
\begin{equation}\label{eq:sampled-gradient-mean}
\mathbb E\bigl[\nabla_\theta\hat J(\theta;\xi)\bigr]
=G(\theta):=\int_\Omega r(x,\theta)\,\nabla_\theta r(x,\theta)\,dx,
\end{equation}
and $\nabla_\theta\hat J(\theta;\xi)\to G(\theta)$ almost surely as
$n\to\infty$.
\item  If, in addition to the hypotheses of (ii), the map
$\vartheta\mapsto \mathcal R(u_\vartheta)=r(\cdot,\vartheta)$ is Fr\'echet differentiable at
$\theta$ as a $Y$-valued map, then
$G(\theta)=\nabla_\theta J(\theta)$.
\end{enumerate}
\end{proposition}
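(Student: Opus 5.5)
For part (i), $\hat J(\theta;\xi)$ is an average of i.i.d.\ copies of the random variable $Z=\tfrac{|\Omega|}{2}r(\xi,\theta)^2$. Since the density of $\xi$ is $1/|\Omega|$ on $\Omega$, we get $\mathbb E[Z]=\tfrac12\int_\Omega r(x,\theta)^2\,dx=J(\theta)$, which is finite because $r(\cdot,\theta)\in L^2(\Omega)$. Linearity of expectation then gives unbiasedness, and Kolmogorov's strong law of large numbers, which needs only integrability of $Z$, gives almost sure convergence. We identify $J(\theta)$ with $J(u_\theta)=\tfrac12\|\mathcal R(u_\theta)\|_{L^2}^2$.

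For part (ii), we first handle differentiability. Let $E\subset\Omega$ be the Lebesgue-null set of points $x$ at which $\vartheta\mapsto r(x,\vartheta)$ fails to be differentiable at $\theta$. Each $\xi_j$ is uniform, so $\mathbb P(\xi_j\in E)=0$, and a union over the finitely many $j$ shows that almost surely no sample lies in $E$. On that event, every summand $\tfrac12 r(\xi_j,\cdot)^2$ is differentiable at $\theta$ by the chain rule, with gradient $r\nabla_\theta r$, so \eqref{eq:sampled-gradient} holds. The gradient is then again an average of i.i.d.\ vectors $|\Omega|\,r(\xi,\theta)\nabla_\theta r(\xi,\theta)$. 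Their expectation is $G(\theta)$, and it is finite by the assumed $L^1$ integrability of $r\nabla_\theta r$. The strong law, applied componentwise in $\mathbb R^N$, yields almost sure convergence to $G(\theta)$. The one subtlety is measurability of $x\mapsto\nabla_\theta r(x,\theta)$. We will handle it by writing each partial derivative as a limit of difference quotients along a fixed sequence $h_m\to0$, which are measurable in $x$. Implicit in the hypothesis is that $r(\cdot,\vartheta)$ is measurable for nearby $\vartheta$.

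For part (iii), we use the chain rule for $J(\vartheta)=\tfrac12\|\mathcal R(u_\vartheta)\|_Y^2$: the squared norm is Fréchet differentiable on the Hilbert space $Y$, and $\vartheta\mapsto\mathcal R(u_\vartheta)$ is Fréchet differentiable at $\theta$ by assumption. This gives $\partial_{\theta_k}J(\theta)=\langle r(\cdot,\theta),L e_k\rangle_Y$, where $L$ is the Fréchet derivative and $e_k$ is the $k$-th coordinate vector. The main obstacle is identifying the $Y$-valued derivative $Le_k$ with the pointwise partial derivative $\partial_{\theta_k}r(\cdot,\theta)$. Fréchet differentiability gives $\bigl(r(\cdot,\theta+he_k)-r(\cdot,\theta)\bigr)/h\to Le_k$ in $L^2$ as $h\to0$. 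Along a sequence $h_m\to0$ we can therefore extract a subsequence converging almost everywhere to $Le_k$. By hypothesis, the same difference quotients converge almost everywhere to $\partial_{\theta_k}r(x,\theta)$. Hence $Le_k=\partial_{\theta_k}r(\cdot,\theta)$ almost everywhere, and substituting this into the pairing gives $\nabla_\theta J(\theta)=G(\theta)$.

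It is worth noting that, without the hypothesis of (iii), pointwise differentiability can hold while $L^2$ differentiability fails, for example when residual jumps move with $\theta$. This is exactly the gap that is exploited later in the paper.
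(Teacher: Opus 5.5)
Your proposal is correct and follows the paper's argument: the strong law of large numbers for (i) and (ii), and for (iii) the chain rule for the squared $Y$-norm, with an a.e.-convergent subsequence of the $L^2$-convergent difference quotients identifying the Fréchet derivative with $\partial_{\theta_k}r(\cdot,\theta)$. You also fill in details the paper leaves implicit, namely that the samples avoid the null set of non-differentiability and that $\nabla_\theta r$ is measurable; for the almost-sure convergence as $n\to\infty$, take the countable union over all $j$, which is still null.
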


\begin{proof}
 Parts (i) and (ii) follow from the strong law of large numbers and the integrability assumptions. For (iii), Fr\'echet differentiability gives convergence of the parameter difference quotients in $L^2(\Omega)$. Along a subsequence they converge almost everywhere, so hypothesis (ii) identifies their limits with the pointwise derivatives $\partial_{\theta_k}r$. Differentiating $J(\theta)=\tfrac12\|\mathcal R(u_\theta)\|_{Y}^2$ therefore gives $\partial_{\theta_k}J(\theta)=\langle r(\cdot,\theta),\partial_{\theta_k}r(\cdot,\theta)\rangle_{L^2}=G_k(\theta)$ for all $k$.
 \end{proof}

Thus the sampled loss consistently estimates $J$ whenever the residual is
square-integrable, while the sampled gradient converges to {the mean sampled gradient $G(\theta)$}.  The hypothesis of (iii) holds, for example, under the standing assumptions of Sections~\ref{subsec:tiers} and~\ref{subsec:parametric-functions}. In that case the chain rule yields $G(\theta)=\nabla_\theta J(\theta)$. {For moving-interface parameterizations, residual jumps at
parameter-dependent locations can prevent the residual map from
being differentiable into $L^2(\Omega)$. Differentiating $J$ can
then produce interface-motion terms absent from $G$.
Proposition~\ref{prop:sgd-bias} computes this difference.
 The same discrepancy occurs for a deterministic quadrature rule that converges for the integrand $r\,\partial_{\theta_k}r$: the quadrature approximation of this pointwise derivative converges to $G_k$, {which can differ from $\partial_{\theta_k}J$.} Random sampling additionally introduces variance.

 For each fixed sample set, the sampled gradient is the exact derivative of the sampled functional wherever that derivative exists. The consistency question concerns its relation to the derivative of the continuous functional as the sample size increases. Parameter-dependent interfaces can invalidate this consistency even when the sampled functional itself is a consistent approximation of the continuous loss.
}
}

\section{Linear equations: projection geometry}
\label{sec:linear-pdes}

Let $X$ and $Y$ be as in Section~\ref{subsec:tiers}, and let
$A:X\to Y$ be bounded and linear. {Fix a function $u_b$ satisfying the prescribed
boundary data, taking $u_b=0$ for homogeneous data. We assume that $A$
extends linearly to $X+\operatorname{span}\{u_b\}$, with values in $Y$,
and use the same symbol for this extension. Consider $Au=f$, with
$f\in Y$ and $u\in u_b+X$. The residual map
$\mathcal R(u)=F(\cdot;u)=Au-f$ satisfies
\[
\mathcal R(u_b+w)=Aw+Au_b-f,\qquad w\in X.
\]
Thus the shifted map $w\mapsto\mathcal R(u_b+w)$ is $C^1$ from $X$ to
$Y$, and $D\mathcal R(u)[v]=Av$ for $u\in u_b+X$ and $v\in X$.}

{In what follows we write $J(\theta):=J(u_\theta)$ for the loss as a function of the parameters.}
The loss functional is
\[
J(\theta)=\frac12\|Au_\theta-f\|_{Y}^2,
\]
where {$\mathcal U=\{u_\theta:\theta\in\mathbb R^N\}\subset u_b+X$
is a parameterized family as in \eqref{space of parametrized functions},
with $\theta\mapsto u_\theta$ of class $C^1$ in the affine-space convention
of Section~\ref{subsec:tiers}. We write}
\[
 u_\theta=u_b+\widetilde u_\theta,
\]
with $\widetilde u_\theta\in X$ the homogeneous component, and we impose the compatibility condition
\begin{equation}\label{eq:trial-space-compatibility-linear}
\widetilde u_\theta\in\mathcal V_\theta.
\end{equation}
This compatibility condition holds automatically for the linear model \eqref{eq:linear-trial-model} below. It also holds for many neural parameterizations with a final linear layer, after separating the fixed function used to satisfy the boundary data.

{The image $A\mathcal V_\theta=D\mathcal R(u_\theta)[\mathcal V_\theta]\subset Y$ is the space appearing in Proposition~\ref{prop:parametric-fredholm}; it is closed because $\mathcal V_\theta$ is finite-dimensional.}

\begin{theorem}[Projection formula for a linear equation]\label{thm:linear-projection}
Assume \eqref{eq:trial-space-compatibility-linear}, and let $\theta^*$ be a critical point of $J$. Then
\begin{equation}\label{eq:linear-projection}
A(u_{\theta^*}-u_b)
=
P_{A\mathcal V_{\theta^*}}(f-Au_b),
\end{equation}
and
\begin{equation}\label{eq:linear-residual-distance}
\|Au_{\theta^*}-f\|_{Y}
=
\operatorname{dist}_{Y}
\bigl(f-Au_b,A\mathcal V_{\theta^*}\bigr),
\end{equation}
where $P_{A\mathcal{V}_{\theta^\ast}}$ is the orthognal projection of $Y$ onto $A\mathcal{V}_{\theta^\ast}$.
\end{theorem}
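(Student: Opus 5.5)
The plan is to combine the parametric range orthogonality of Proposition~\ref{prop:parametric-fredholm} with the compatibility condition \eqref{eq:trial-space-compatibility-linear}, and then recognize the standard characterization of an orthogonal projection onto a closed subspace of a Hilbert space.

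\emph{Orthogonality of the residual.} At the critical point $\theta^*$, Proposition~\ref{prop:parametric-fredholm}, applied to the shifted map $w\mapsto\mathcal R(u_b+w)$ with $D\mathcal R(u)[v]=Av$, gives
\[
Au_{\theta^*}-f\perp A\mathcal V_{\theta^*}\quad\text{in }Y.
\]
Here one must check that the proposition applies in the affine convention. The partial derivatives satisfy $\partial_{\theta_k}u_\theta=\partial_{\theta_k}\widetilde u_\theta\in X$. The chain rule for $\theta\mapsto\tfrac12\|A\widetilde u_\theta+Au_b-f\|_Y^2$ then gives exactly the identity \eqref{eq:dJ_without_integration_by_parts} with $v\in\mathcal V_{\theta^*}$.

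\emph{Projection identity.} Set $g:=f-Au_b$ and $z:=A(u_{\theta^*}-u_b)=A\widetilde u_{\theta^*}$. By \eqref{eq:trial-space-compatibility-linear}, $\widetilde u_{\theta^*}\in\mathcal V_{\theta^*}$, so $z\in A\mathcal V_{\theta^*}$. Linearity of the extension of $A$ gives $Au_{\theta^*}-f=z-g$, so the first step says $g-z\perp A\mathcal V_{\theta^*}$. The subspace $A\mathcal V_{\theta^*}$ is finite-dimensional and hence closed. Therefore $z$ is the unique element of that subspace whose difference from $g$ is orthogonal to it, i.e.\ $z=P_{A\mathcal V_{\theta^*}}g$, which is \eqref{eq:linear-projection}.

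\emph{Distance formula.} From \eqref{eq:linear-projection}, $\|Au_{\theta^*}-f\|_Y=\|g-P_{A\mathcal V_{\theta^*}}g\|_Y$. By Pythagoras, for any $y\in A\mathcal V_{\theta^*}$ we have $\|g-y\|^2=\|g-Pg\|^2+\|Pg-y\|^2$. Hence this norm equals $\operatorname{dist}_Y(g,A\mathcal V_{\theta^*})$, which is \eqref{eq:linear-residual-distance}.

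The only step needing care is the first: making sure the critical-point condition is stated in terms of the correct variation space $\mathcal V_{\theta^*}\subset X$ under the affine convention. Equally important is seeing where compatibility enters: it is exactly what places $z$ inside the subspace, so that orthogonality identifies $z$ as the projection rather than merely giving a perpendicular residual.
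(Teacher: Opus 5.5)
Your proposal is correct and follows essentially the same route as the paper: orthogonality of the residual from Proposition~\ref{prop:parametric-fredholm}, compatibility to place $A(u_{\theta^*}-u_b)$ in $A\mathcal V_{\theta^*}$, and recognition of $f-Au_b=A(u_{\theta^*}-u_b)-(Au_{\theta^*}-f)$ as the orthogonal decomposition onto $A\mathcal V_{\theta^*}$ and its complement. Your explicit check of the affine convention and the Pythagoras step for the distance formula are details the paper leaves implicit.
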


\begin{proof}
Criticality and Proposition~\ref{prop:parametric-fredholm} give
$r:=Au_{\theta^*}-f\perp A\mathcal V_{\theta^*}$. By
\eqref{eq:trial-space-compatibility-linear},
$A(u_{\theta^*}-u_b)\in A\mathcal V_{\theta^*}$. Since
\[
f-Au_b=A(u_{\theta^*}-u_b)-r,
\]
this is the orthogonal decomposition of $f-Au_b$ into its
$A\mathcal V_{\theta^*}$ and $(A\mathcal V_{\theta^*})^\perp$ components. Equations
\eqref{eq:linear-projection} and \eqref{eq:linear-residual-distance} follow.
\end{proof}

\begin{corollary}[Exactness criterion]\label{cor:linear-exactness}
Under the hypotheses of Theorem~\ref{thm:linear-projection},
\[
Au_{\theta^*}=f
\quad\Longleftrightarrow\quad
f-Au_b\in A\mathcal V_{\theta^*}.
\]
\end{corollary}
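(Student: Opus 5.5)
The plan is to derive the equivalence directly from Theorem~\ref{thm:linear-projection}, which applies to any critical point $\theta^*$ once the compatibility condition \eqref{eq:trial-space-compatibility-linear} holds. The key observation is that $Au_{\theta^*}-f = A(u_{\theta^*}-u_b) - (f-Au_b)$. By \eqref{eq:linear-projection}, this equals $P_{A\mathcal V_{\theta^*}}(f-Au_b)-(f-Au_b)$, which is minus the component of $f-Au_b$ orthogonal to $A\mathcal V_{\theta^*}$. Equivalently, by \eqref{eq:linear-residual-distance}, $\|Au_{\theta^*}-f\|_Y=\operatorname{dist}_Y(f-Au_b,A\mathcal V_{\theta^*})$.

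For the forward implication, I will assume $Au_{\theta^*}=f$. Then $f-Au_b=A(u_{\theta^*}-u_b)=A\widetilde u_{\theta^*}$. By \eqref{eq:trial-space-compatibility-linear}, $\widetilde u_{\theta^*}\in\mathcal V_{\theta^*}$, so $f-Au_b\in A\mathcal V_{\theta^*}$. This direction does not even use criticality; it uses only compatibility.

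For the converse, I will assume $f-Au_b\in A\mathcal V_{\theta^*}$. Then its distance to $A\mathcal V_{\theta^*}$ is zero. Here I use that $A\mathcal V_{\theta^*}$ is a subspace; closedness is not needed, since the point lies in the set itself. By \eqref{eq:linear-residual-distance}, the residual norm vanishes, and hence $Au_{\theta^*}=f$.

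There is no real obstacle. The only care needed is to note that the converse genuinely requires $\theta^*$ to be critical, which is what allows Theorem~\ref{thm:linear-projection} to be applied. It is also worth checking that the orthogonal projection is well defined, which holds because $A\mathcal V_{\theta^*}$ is finite-dimensional and therefore closed, as noted before the theorem.
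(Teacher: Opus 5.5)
Your proof is correct and is essentially the paper's argument. The paper gives no separate proof: it reads the corollary off the distance identity \eqref{eq:linear-residual-distance}, where the residual vanishes exactly when $f-Au_b$ has zero distance to the closed finite-dimensional subspace $A\mathcal V_{\theta^*}$. Your only variation is to obtain the forward direction directly from the compatibility condition \eqref{eq:trial-space-compatibility-linear}, which avoids the closedness step and makes clear that criticality is needed only for the converse.
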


\begin{corollary}[A posteriori estimate]\label{cor:a-posteriori}
Let $u^*\in u_b+X$ solve $Au=f$ with the same boundary data as $u_{\theta^*}$, so that $u_{\theta^*}-u^*\in X$. Assume that, for some norm $\|\cdot\|_E$ on $X$ and some constant $C_A>0$,
\[
\|v\|_E\le C_A\|Av\|_{Y}
\qquad\text{for every }v\in X.
\]
Then every critical point covered by Theorem~\ref{thm:linear-projection} satisfies
\[
\|u_{\theta^*}-u^*\|_E
\le
C_A
\operatorname{dist}_{Y}
\bigl(f-Au_b,A\mathcal V_{\theta^*}\bigr).
\]
\end{corollary}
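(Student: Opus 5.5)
The plan is to reduce the estimate to the residual identity \eqref{eq:linear-residual-distance} of Theorem~\ref{thm:linear-projection} and then invoke the stability hypothesis on the difference of the two functions. First set $v:=u_{\theta^*}-u^*$. By assumption both functions lie in $u_b+X$, so $v\in X$, and the hypothesis $\|v\|_E\le C_A\|Av\|_Y$ can be applied to it.

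Next, compute $Av$ using the linear extension of $A$ to $X+\operatorname{span}\{u_b\}$. Linearity gives $Av=Au_{\theta^*}-Au^*$. Since $u^*$ solves the equation, $Au^*=f$, and therefore $Av=Au_{\theta^*}-f=\mathcal R(u_{\theta^*})$. The error in the energy norm is thus controlled by the residual norm:
\[
\|u_{\theta^*}-u^*\|_E\le C_A\|Au_{\theta^*}-f\|_Y .
\]

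Finally, $\theta^*$ is a critical point satisfying the compatibility condition \eqref{eq:trial-space-compatibility-linear}, so \eqref{eq:linear-residual-distance} identifies $\|Au_{\theta^*}-f\|_Y$ with $\operatorname{dist}_Y(f-Au_b,A\mathcal V_{\theta^*})$. Substituting this into the previous display gives the claim.

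There is no substantive obstacle. The only point needing care is the bookkeeping of the affine-space convention: $A$ acts on $u_{\theta^*}$ and $u^*$ through the extension to $X+\operatorname{span}\{u_b\}$, and the stability bound is only assumed on $X$. This is why the hypothesis that $u^*$ carries the same boundary data matters: it places the difference $v$ in $X$, where the bound holds.
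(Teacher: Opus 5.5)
Your proposal is correct and follows the paper's proof: place $u_{\theta^*}-u^*$ in $X$, apply the stability bound to get $\|u_{\theta^*}-u^*\|_E\le C_A\|Au_{\theta^*}-f\|_Y$, and substitute the residual identity \eqref{eq:linear-residual-distance}. Your explicit use of the linear extension of $A$ to justify $A(u_{\theta^*}-u^*)=Au_{\theta^*}-f$ is a step the paper leaves implicit.
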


\begin{proof}
The difference $u_{\theta^*}-u^*$ belongs to $X$, so the stability estimate and \eqref{eq:linear-residual-distance} give
\[
\|u_{\theta^*}-u^*\|_E
\le C_A\|Au_{\theta^*}-f\|_{Y}
= C_A\operatorname{dist}_{Y}\bigl(f-Au_b,A\mathcal V_{\theta^*}\bigr).
\]
\end{proof}

The projection distance in this estimate is computable after training from the finite-dimensional space
\[
A\mathcal V_{\theta^*}
=
\operatorname{span}\{A\partial_{\theta_k}u_{\theta^*}:k=1,\ldots,N\},
\]
{so the whole bound is computable whenever a usable stability constant $C_A$ is available.} By Corollary~\ref{cor:linear-exactness} it vanishes exactly when \(f-Au_b\) lies in \(A\mathcal V_{\theta^*}\); we do not require the much stronger condition that $u^*-u_b$ belong to $\mathcal V_\theta$ for every $\theta$.
{The corollary reads the continuum residual as an a posteriori error indicator, with $C_A$ as the conversion factor. When the only information about $f$ and the boundary data consists of point values at finitely many sites, \cite{bonito2026consistent} show that {the standard discrete $L^2$ residual loss can fail to control the error at the rate permitted by this information} and construct losses that do; their analysis concerns minimizers of the loss and leaves the optimization aside, which is the part addressed here.}

For the Dirichlet Laplacian, take $E=L^2(\Omega)$ and use the spectral stability estimate
\[
\|v\|_{L^2(\Omega)}\le\lambda_1^{-1}\|{-\Delta v}\|_{L^2(\Omega)}
\]
for homogeneous Dirichlet errors. Corollary~\ref{cor:a-posteriori} then gives
\[
\|u_{\theta^*}-u^*\|_{L^2(\Omega)}
\le
\lambda_1^{-1}
\operatorname{dist}_{L^2(\Omega)}
\bigl(f-Au_b,A\mathcal V_{\theta^*}\bigr),
\]
where $\lambda_1$ is the first Dirichlet eigenvalue.

\medskip\noindent\textbf{Approximate stationarity.}
The same decomposition distinguishes an inadequate image $A\mathcal V_\theta$ from an unresolved optimization error. At any parameter value satisfying \eqref{eq:trial-space-compatibility-linear}, define the Jacobian of the map $\theta\mapsto \mathcal{R}(u_\theta) = Au_\theta - f$, denoted by $B_\theta: \R^N\to Y$, as
\[
B_\theta c:=\sum_{k=1}^N c_kAw_k, \qquad \forall c = (c_1, \ldots, c_N)\in \R^N.
\]
Its adjoint $B_\theta^\ast: Y\to \R^N$ is given by $(B_\theta^\ast r)_k = \langle Aw_k, r\rangle_Y$ for $k = 1, \ldots, N$.
We also define
\[
\qquad H_\theta:=B_\theta^*B_\theta,
\quad \text{and} \quad g_\theta:=\nabla_\theta J(\theta)=B_\theta^*(Au_\theta-f).
\]
Let $H_\theta^\dagger$ denote the Moore--Penrose inverse. Since
$B_\theta H_\theta^\dagger B_\theta^*=P_{A\mathcal V_\theta}$, orthogonal decomposition gives
\begin{equation}\label{eq:approximate-projection}
\|Au_\theta-f\|_{Y}^2
=\operatorname{dist}_{Y}(f-Au_b,A\mathcal V_\theta)^2
+g_\theta^T H_\theta^\dagger g_\theta.
\end{equation}
Indeed, the perpendicular component of the residual is $-P_{(A\mathcal V_\theta)^\perp}(f-Au_b)$, while its component in $A\mathcal V_\theta$ has squared norm $g_\theta^TH_\theta^\dagger g_\theta$. Under the stability estimate of Corollary~\ref{cor:a-posteriori}, the square root of the right-hand side, multiplied by $C_A$, bounds the solution error. A small Euclidean parameter gradient alone is therefore insufficient unless the conditioning of the nonzero directions $Aw_k$ is also controlled. 

\medskip\noindent\textbf{Linear trial spaces.}
A particularly simple special case occurs when the parameterization itself
is linear,
\begin{equation}\label{eq:linear-trial-model}
u_\theta=u_b+\sum_{k=1}^N\theta_k\phi_k,
\end{equation}
with a fixed function $u_b$ carrying the boundary data and fixed functions
$\phi_k$. Then
\[
\mathcal V_\theta=\operatorname{span}\{\phi_1,\ldots,\phi_N\}
\]
is independent of $\theta$, and so is $A\mathcal V_\theta$.
Theorem~\ref{thm:linear-projection} therefore reduces to ordinary
least-squares projection onto the fixed space
$A\operatorname{span}\{\phi_1,\ldots,\phi_N\}$: a positive residual is the
component of the data not represented by the image of the trial space, and
the parameter optimization is therefore a convex quadratic least-squares problem. Sufficiently
smooth spline and spectral spaces are the classical examples; for a
second-order strong residual the $\phi_k$ must have enough regularity for
$\phi_k''$ to lie in the residual space. Neural parameterizations, by
contrast, generally have parameter-dependent spaces of admissible variations.

\section{Scalar conservation laws}
\label{sec:escape}

Equations in divergence form include important classes of elliptic
equations and hyperbolic conservation laws. In this section, we
consider one-dimensional scalar conservation laws and examine how
viscosity affects residual minimization. We first determine what
stationarity implies about the residual under Dirichlet boundary
conditions. We then study the limits of minimizing sequences.
In the inviscid problem, these limits can lose the regularity
required of admissible trial functions, leading to parameter escape.

\subsection{The effect of viscosity}
\label{subsec:viscous-inviscid}

Consider the one-dimensional family
\begin{equation}\label{def:F_eps}
    F^\varepsilon(x;u):=\partial_x f\bigl(u(x)\bigr)-\varepsilon u_{xx}(x)-g(x),
\qquad x\in(0,1),
\end{equation}
with $f\in C^2(\mathbb R)$ and {$g\in L^2(0,1)$}. We impose Dirichlet boundary conditions
\begin{equation}\label{def:u_l-u_r}
u(0)=u_\ell,
\qquad
u(1)=u_r,    
\end{equation}
and least-squares functional
\begin{equation}\label{def:J_eps}
J^\varepsilon(u):=\frac12\int_0^1 |F^\varepsilon(x;u)|^2\,dx.
\end{equation}
{This is the loss functional of Section~\ref{subsec:tiers} for the residual map $\mathcal R^\varepsilon(u):=F^\varepsilon(\cdot;u)$, with $Y=L^2(0,1)$ and with $X$ the space carrying the homogeneous Dirichlet conditions, $H^2(0,1)\cap H^1_0(0,1)$ for $\varepsilon>0$ and $H^1_0(0,1)$ for $\varepsilon=0$; since $H^1(0,1)\hookrightarrow C^0([0,1])$ and $f\in C^2(\mathbb R)$, $\mathcal R^\varepsilon\in C^1(X;Y)$ in both cases.}

\begin{proposition}[Consequences of viscosity on stationarity]
\label{prop:viscous-inviscid}
Let $F^\varepsilon$, $J^\varepsilon$, and the Dirichlet data be as above,
and suppose that $\bar u$ satisfies these boundary conditions.
\begin{enumerate}
\item Let $\varepsilon>0$ and
$\bar u\in H^2(0,1)$. If
\[
DJ^\varepsilon(\bar u)[v]=0
\qquad\text{for every }v\in H^2(0,1)\cap H^1_0(0,1),
\]
then $F^\varepsilon(\cdot;\bar u)=0$ almost everywhere on $(0,1)$.\\

\item Let $\varepsilon=0$ and
$\bar u\in H^1(0,1)$, and assume that
$|f'(\bar u)|>0$ on $[0,1]$.
Then $DJ^0(\bar u)[v]=0$ for every $v\in H^1_0(0,1)$ if and only if
\[
\partial_x f(\bar u)-g=c~\text{almost everywhere},
\qquad
c=f(u_r)-f(u_\ell)-\int_0^1g(x)\,dx.
\]
Consequently, a stationary $\bar u$ solves the differential equation
if and only if
\[
f(u_r)-f(u_\ell)=\int_0^1g(x)\,dx.
\]
\end{enumerate}
\end{proposition}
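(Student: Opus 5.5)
Both parts rest on the range condition \eqref{eq:crit}: at a critical point the residual $r:=F^\varepsilon(\cdot;\bar u)\in L^2(0,1)$ is orthogonal to $D\mathcal R^\varepsilon(\bar u)[X]$. We then identify this range explicitly in each case.

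\emph{Part (i).} For $\varepsilon>0$ the linearization is
\[
Lv:=D\mathcal R^\varepsilon(\bar u)[v]=-\varepsilon v''+\bigl(f'(\bar u)v\bigr)',
\qquad v\in H^2(0,1)\cap H^1_0(0,1).
\]
Its coefficient $a:=f'(\bar u)$ lies in $H^1(0,1)\subset C^0$, because $\bar u\in H^2$ and $f\in C^2$. We show that $L$ is onto $L^2(0,1)$ (dense range suffices); orthogonality then forces $r=0$. Fix $h\in L^2$ and solve $Lv=h$ with $v(0)=v(1)=0$. Integrating once gives the first-order equation
\[
-\varepsilon v'+a v=H+\kappa,
\qquad H(x)=\int_0^x h .
\]
Multiplying by the integrating factor $e^{-A/\varepsilon}$, where $A'=a$, solves it explicitly with $v(0)=0$. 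The constant $\kappa$ enters $v(1)$ affinely with nonzero coefficient $-\tfrac1\varepsilon\int_0^1 e^{(A(1)-A(s))/\varepsilon}\,ds\neq0$, so we choose $\kappa$ to make $v(1)=0$. From $v'=(av-H-\kappa)/\varepsilon\in H^1$ we get $v\in H^2$. An equivalent route is to note that the adjoint equation $-\varepsilon r''-a r'=0$, tested against all $v\in H^2\cap H^1_0$, yields $r(0)=r(1)=0$ as well, whose only solution is $r=0$; the explicit construction avoids the regularity bookkeeping for $r$.

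\emph{Part (ii).} For $\varepsilon=0$, $Lv=(a v)'$ with $a=f'(\bar u)\in H^1\subset C^0$, and $|a|\ge\delta>0$ by compactness and continuity. The key step is to compute the range:
\[
\{(av)':v\in H^1_0\}=\Bigl\{h\in L^2:\int_0^1h=0\Bigr\}.
\]
The inclusion ``$\subseteq$'' holds because $av$ vanishes at both endpoints. For ``$\supseteq$'', set $v=H/a$ with $H(x)=\int_0^xh$; then $v\in H^1_0$, since $1/a\in H^1$ and $H(1)=0$. This uses the assumption $|f'(\bar u)|>0$, and it is where nondegeneracy is essential. The orthogonal complement of the mean-zero functions is the constants, so stationarity is equivalent to $r=c$ a.e.

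It remains to identify $c$. Since $f(\bar u)\in W^{1,1}$ with $\partial_xf(\bar u)=f'(\bar u)\bar u'$, integrating $r$ over $(0,1)$ gives
\[
c=\int_0^1 r=f(u_r)-f(u_\ell)-\int_0^1 g .
\]
Conversely, if $r$ equals this constant then $DJ^0(\bar u)[v]=c\int_0^1(av)'=0$ for every $v\in H^1_0$, which gives the equivalence. The final statement follows because $\bar u$ solves the equation iff $r=0$, i.e.\ iff $c=0$.

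The main obstacle is the range computation in (ii), specifically checking that $H/a\in H^1_0$. This needs $a$ continuous and bounded away from zero, which is why the hypothesis is imposed on all of $[0,1]$. The corresponding step in (i), solvability of the boundary value problem via the choice of $\kappa$, is routine.
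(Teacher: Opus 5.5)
Your proposal is correct and follows essentially the same route as the paper. Both use the range condition, integrate the test problem once, and use the integrating factor together with the free constant to enforce $v(1)=0$ when $\varepsilon>0$; for $\varepsilon=0$ both take $v=H/f'(\bar u)$ to show that the linearized range is exactly the mean-zero subspace. You also spell out three steps the paper leaves implicit: the inclusion $\subseteq$, the regularity $1/a\in H^1$, and the converse direction.
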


\begin{proof}
Stationarity is condition \eqref{eq:crit}, $\mathcal R^\varepsilon(\bar u)\perp D\mathcal R^\varepsilon(\bar u)[X]$,
with $X=H^2(0,1)\cap H^1_0(0,1)$ in part (i) and $X=H^1_0(0,1)$ in part (ii).
We determine the range $D\mathcal R^\varepsilon(\bar u)[X]$
by solving
\begin{equation}\label{eq:test-problem}
\partial_x\bigl(f'(\bar u)v-\varepsilon v_x\bigr)=\tilde g,
\qquad v\in X.
\end{equation}
For $\tilde g\in L^2(0,1)$, set
$\widetilde G(x):=\int_0^x\tilde g(s)\,ds$.
One integration gives
\[
f'(\bar u)v-\varepsilon v_x=\widetilde G+C.
\]
A direct calculation gives the candidate solution
\[
v(x)=
\begin{cases}
-\displaystyle\frac{1}{\varepsilon\mu_\varepsilon(x)}
\int_0^x\mu_\varepsilon(s)\bigl(\widetilde G(s)+C\bigr)\,ds,
& \varepsilon>0,\\[15pt]
f'(\bar u(x))^{-1}(\widetilde G(x)+C),
& \varepsilon=0,
\end{cases}
\]
where, for $\varepsilon>0$,
\[
\mu_\varepsilon(x)
:=\exp\!\left(-\frac1\varepsilon
\int_0^x f'(\bar u(s))\,ds\right) >0.
\]

\noindent\textup{(i)} In the viscous case, $v(0)=0$ already holds, and
$v(1)=0$ determines
\[
C=-\frac{\int_0^1\mu_\varepsilon(s)\widetilde G(s)\,ds}
        {\int_0^1\mu_\varepsilon(s)\,ds}.
\]
Since
$f'(\bar u)\in C^1([0,1])$ and $\widetilde G\in H^1(0,1)$, the
resulting function belongs to $H^2(0,1)\cap H^1_0(0,1)$.
Thus every $\tilde g\in L^2(0,1)$ is attainable, and
{the full-space orthogonality condition \eqref{eq:crit} implies}
$F^\varepsilon(\cdot;\bar u)=0$.

\noindent\textup{(ii)} In the inviscid case, $v(0)=0$ implies $C=0$, while
$v(1)=0$ requires
\[
\widetilde G(1)=\int_0^1\tilde g(x)\,dx=0.
\]
Conversely, for every $\tilde g$ with zero mean, the nonsonic
assumption ensures that $v=\widetilde G/f'(\bar u)$ belongs to
$H^1_0(0,1)$ and solves \eqref{eq:test-problem}.

Hence the linearized range is exactly the subspace of $L^2(0,1)$
consisting of functions with zero mean. Stationarity is therefore
equivalent to the residual being a constant $c$. Integrating gives
\[
c=f(u_r)-f(u_\ell)-\int_0^1g(x)\,dx,
\]
which also proves the stated condition for the residual to vanish.
\end{proof}

Both arguments require only that the residual function lies in $L^2(0,1)$.
In the inviscid case, a trial function with constant residual is
stationary even when $f'(\bar u)$ vanishes. The nonsonic assumption
is used to show that stationarity forces the residual to be constant.

The nonsonic assumption can be relaxed if the residual has additional
regularity. For {$r=F^0(\cdot;\bar u)\in H^1(0,1)$}, stationarity gives
$f'(\bar u)r_x=0$. If $f'(\bar u)$ is nonzero almost everywhere, then
$r_x=0$ almost everywhere and $r$ is constant.
This argument does not cover the transonic limit studied below,
whose residual has a jump and lies outside $H^1(0,1)$.

For positive viscosity, the residual also controls the trial functions
and their convergence to a solution. We establish this for the
source-free problem $g=0$ with $u_\ell>u_r$.

\begin{proposition}[Convergence to the viscous solution]
\label{prop:viscous-confinement}
Let $\varepsilon>0$, $f\in C^2(\mathbb R)$, $g\equiv0$, and
$u_\ell>u_r$. 
Denote by $u^*_\varepsilon\in H^2(0,1)$ the unique
solution of $F^\varepsilon(\cdot;u^*_\varepsilon)=0$ 
satisfying the boundary conditions \eqref{def:u_l-u_r}.

Every minimizing sequence for $J^\varepsilon$ over
$H^2(0,1)$ functions with these boundary values converges
strongly to $u^*_\varepsilon$ in $H^2(0,1)$.
\end{proposition}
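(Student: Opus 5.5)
The plan is to exploit the one-dimensional structure: integrate the equation once and compare with the solution $u^*_\varepsilon$ using ODE comparison and continuous dependence. Two preliminary facts come first. Since $u^*_\varepsilon$ is admissible with $J^\varepsilon(u^*_\varepsilon)=0$, the infimum is $0$. So for a minimizing sequence $(u_n)$ the residuals $r_n:=F^\varepsilon(\cdot;u_n)$ satisfy $\|r_n\|_{L^2}\to0$.

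Setting $R_n(x):=\int_0^x r_n\,ds$, one integration gives
\[
\varepsilon u_n'=f(u_n)-C_n-R_n,\qquad \|R_n\|_{L^\infty}\le\|r_n\|_{L^2}=:\delta_n\to0,
\]
for some constant $C_n$. Similarly, $u^*_\varepsilon$ solves $\varepsilon w'=f(w)-C^*$ for some $C^*$. Thus $u_n$ is a solution, with initial value $u_\ell$, of a perturbation of the autonomous ODE family $\varepsilon w'=f(w)-C$.

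For $C$ near $C^*$, let $w_C$ be the solution of $\varepsilon w'=f(w)-C$ with $w_C(0)=u_\ell$. By continuous dependence, $w_C$ exists on $[0,1]$ because $w_{C^*}=u^*_\varepsilon$ is bounded and $f$ is locally Lipschitz. The shooting map $\Phi(C):=w_C(1)$ is $C^1$, with $\Phi'(C)=z(1)$, where $\varepsilon z'=f'(u^*_\varepsilon)z-1$ and $z(0)=0$. Solving this linear equation with an integrating factor gives $z(1)<0$. Hence $\Phi$ is strictly decreasing near $C^*$, with $\Phi(C^*)=u_r$.

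The ODE comparison principle gives
\[
w_{C_n+\delta_n}\le u_n\le w_{C_n-\delta_n}\quad\text{on }[0,1],
\]
as long as the bounding solutions exist. Evaluating at $x=1$, where $u_n(1)=u_r$, yields $\Phi(C_n+\delta_n)\le u_r\le\Phi(C_n-\delta_n)$. Monotonicity of $\Phi$ then forces $|C_n-C^*|\le\delta_n+o(1)\to0$, and hence $u_n\to u^*_\varepsilon$ uniformly.

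The main obstacle is making this step rigorous without an a priori bound on $C_n$. If $C_n$ were far from $C^*$, the comparison solutions could blow up or leave the neighbourhood where $\Phi$ is controlled. I would handle this with a continuity or trapping argument. Fix a small $\eta$ and use the monotone ordering of the whole family $\{w_C\}$ in $C$, where existence is checked up to the first blow-up. If $C_n\ge C^*+\eta$, then $u_n\le w_{C_n-\delta_n}\le w_{C^*+\eta/2}$ wherever the latter exists. Since $w_{C^*+\eta/2}$ exists on $[0,1]$ and ends strictly below $u_r$, this contradicts $u_n(1)=u_r$; the case $C_n\le C^*-\eta$ is symmetric. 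If needed, I would use the hypothesis $u_\ell>u_r$ to ensure the relevant one-sided solutions stay bounded, since trajectories heading downward from $u_\ell$ toward $u_r$ remain in a compact range.

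Given uniform convergence, the upgrade to $H^2$ is direct. The first-order identity gives
\[
\varepsilon(u_n-u^*_\varepsilon)'=f(u_n)-f(u^*_\varepsilon)-(C_n-C^*)-R_n\to0\quad\text{in }L^\infty.
\]
The equation itself, in the form $\varepsilon u_n''=f'(u_n)u_n'-r_n$, then shows that $u_n''-u_\varepsilon^{*\prime\prime}\to0$ in $L^2$. Here one uses local uniform continuity of $f'$ on the bounded range of the functions involved. This gives strong convergence of the whole sequence to $u^*_\varepsilon$ in $H^2(0,1)$.
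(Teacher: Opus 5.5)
Your argument is correct, but it follows a different route from the paper's. The paper never introduces the free constant $C_n$ or the comparison family $w_C$. It first proves the a priori bound $\|u_n\|_{L^\infty}\le B+\varepsilon^{-1}\|r_n\|_{L^2}$, with $B=\max(|u_\ell|,|u_r|)$, by testing $f'(u_n)u_n'-\varepsilon u_n''=r_n$ against the truncations $(u_n-B)_+$ and $-(-B-u_n)_+$; the convection term integrates to zero. This bound keeps the mean-value coefficient $a_n$ in the error equation $(a_ne_n-\varepsilon e_n')'=r_n$, $e_n(0)=e_n(1)=0$, bounded uniformly in $n$. The integrating-factor formula from Proposition~\ref{prop:viscous-inviscid}(i) then gives $\|e_n\|_{L^\infty}+\|e_n'\|_{L^2}\le C\|r_n\|_{L^2}$, and the $H^2$ upgrade is the same as yours.

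Your route replaces both the truncation estimate and the linearized stability bound by the scalar comparison principle for the once-integrated equation. It needs no a priori $L^\infty$ bound on $u_n$, and it uses $u_\ell>u_r$ only through the existence of $u^*_\varepsilon$. On the other hand, it depends on the scalar first-order structure. The paper's energy-plus-linearization argument is closer to standard PDE stability arguments and yields the rate $O(\|r_n\|_{L^2})$ directly.

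Your argument gives the same rate. Strict comparison makes $\Phi$ strictly decreasing on its whole domain, which is an open interval. So once $C_n$ is near $C^*$, the sandwich at $x=1$ gives $|C_n-C^*|\le\delta_n$ exactly, and Gronwall gives $\|u_n-u^*_\varepsilon\|_{L^\infty}=O(\delta_n)$. The computation of $\Phi'(C^*)$ is therefore not needed.

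The trapping step is cleaner without the intermediate $w_{C_n-\delta_n}$, whose existence on $[0,1]$ is not guaranteed. Suppose $C_n\ge C^*+\eta$ and $\delta_n<\eta/2$. Then $C_n+R_n>C^*+\eta/2$ pointwise, so $u_n$ is a strict subsolution of the equation for $w_{C^*+\eta/2}$, which exists on $[0,1]$ by continuous dependence. Comparing directly gives $u_n(1)<w_{C^*+\eta/2}(1)<u^*_\varepsilon(1)=u_r$, a contradiction; the case $C_n\le C^*-\eta$ is symmetric. Your appeal to $u_\ell>u_r$ to keep one-sided solutions bounded is therefore unnecessary.
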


\begin{proof}
Integrating the equation once gives $f(u)-\varepsilon u'=\lambda$.
If $u'$ vanished at a point, uniqueness for the autonomous equation
would imply that $u$ is constant. Thus $u'$ has a fixed nonzero sign,
which is negative because $u_\ell>u_r$. So $\lambda -f(u)>0$. 
Separation of variables gives
\[
\varepsilon\int_{u_r}^{u_\ell}\frac{dz}{\lambda-f(z)}=1,
\qquad \lambda>\max_{[u_r,u_\ell]}f.
\]
Since the left-hand side is continuous and strictly decreasing
in $\lambda$, from $+\infty$ to $0$, there is a unique $\lambda$
satisfying this equation and hence a unique solution
$u^*_\varepsilon$. In particular,
$\|u^*_\varepsilon\|_{L^\infty}\le B:=\max(|u_\ell|,|u_r|)$.

Since $J^\varepsilon(u^*_\varepsilon)=0$, every minimizing sequence
$\{u_n\}$ satisfies $r_n:=F^\varepsilon(\cdot;u_n)\to0$ in $L^2$.
To bound $u_n$, set $w_n=(u_n-B)_+$. Since the boundary
values of $u_n$ are at most $B$, we have $w_n\in H_0^1(0,1)$.
Multiply the residual equation
\[
f'(u_n)u_n'-\varepsilon u_n''=r_n
\]
by $w_n$ and integrate over $(0,1)$. The convection term
integrates to zero. Integration by parts in the diffusion term,
using $u_n'w_n'=|w_n'|^2$ almost everywhere, gives
\[
\varepsilon\|w_n'\|_{L^2}^2
=\int_0^1 r_nw_n\,dx
\le \|r_n\|_{L^2}\|w_n'\|_{L^2}.
\]
Together with $\|w_n\|_{L^\infty}\le\|w_n'\|_{L^2}$, this yields
$u_n\le B+\varepsilon^{-1}\|r_n\|_{L^2}$.
Multiplying instead by $-(-B-u_n)_+$ gives the corresponding
lower bound. Hence
\begin{equation}\label{eq:viscous-residual-linfty}
\|u_n\|_{L^\infty}
\le B+\varepsilon^{-1}\|r_n\|_{L^2}.
\end{equation}

Thus the trial functions are uniformly bounded. The mean-value formula
gives $f(u_n)-f(u^*_\varepsilon)=a_n e_n$, where
$e_n:=u_n-u^*_\varepsilon$ and $\|a_n\|_{L^\infty}$ is bounded
independently of $n$. Subtracting the residual equations yields
\[
(a_n e_n-\varepsilon e_n')'=r_n,\qquad e_n(0)=e_n(1)=0.
\]
The integrating-factor calculation in
Proposition~\ref{prop:viscous-inviscid}(i) gives
\begin{equation}\label{eq:viscous-stability}
\|e_n\|_{L^\infty}+\|e_n'\|_{L^2}
\le C\|r_n\|_{L^2}\longrightarrow0,
\end{equation}
with $C$ independent of $n$. Finally,
\[
\varepsilon e_n''
=f'(u_n)e_n'
+\bigl(f'(u_n)-f'(u^*_\varepsilon)\bigr)
 (u^*_\varepsilon)'-r_n.
\]
Uniform convergence of $u_n$ and continuity of $f'$ show that
the right-hand side tends to zero in $L^2$.
Thus $u_n\to u^*_\varepsilon$ strongly in $H^2(0,1)$.
\end{proof}

These estimates control the trial functions for fixed $\varepsilon>0$,
with constants that deteriorate as $\varepsilon\downarrow0$.
They do not imply bounded neural-network parameters.

\subsection{{Residual minimization and stationary entropy solutions}}
\label{subsec:transonic-escape}

A central question (Q3) is \emph{whether
residual minimization recovers the entropy solution, including when
that solution contains a shock.} Approximating a discontinuity by
smooth trial functions requires increasingly sharp transitions.
For a fixed neural network architecture whose bounded parameter
sets give bounded $H^1$ seminorms, convergence to a shock in $L^1$
therefore forces the parameters to become unbounded.
Parameter escape can thus accompany successful shock approximation.
The more consequential question is what function the approximations
approach.

{Recovery of an entropy solution requires a compatible boundary
formulation: the exact endpoint constraints imposed on the trial
functions differ from entropy boundary conditions, which do not always
enforce both data as trace equalities~\cite{bardos1979first}.
The example below shows failure to recover a stationary shock
even when its traces agree with the prescribed endpoint values.}

{In this subsection, we consider the inviscid problem
\eqref{def:F_eps} with $\varepsilon=0$ and $g\equiv0$.} {The trial functions are the $H^1(0,1)$ functions with the boundary values \eqref{def:u_l-u_r}, that is, the affine space $u_b+X$ with $X=H^1_0(0,1)$, denoted by $\mathcal A$ below.}

Let $f\in C^2(\mathbb R)$ satisfy
\begin{equation}
    \label{assumptions f conservation laws}
    f''>0 \quad \text{and} \quad f(z)\to\infty \quad \text{as $|z|\to\infty$}
\end{equation}
 so it has a unique minimizer $u_{\mathrm s}$, characterized by
$f'(u_{\mathrm s})=0$. We call $u_{\mathrm s}$ the sonic state
and normalize $f(u_{\mathrm s})=0$, which does not change
$\partial_x f(u)$. We first consider \emph{compressive transonic} boundary data:
\[
u_r<u_{\mathrm s}<u_\ell.
\]

{For every $u\in H^1(0,1)$, the chain rule gives
$\partial_x f(u)=f'(u)u_x\in L^2(0,1)$, so that $\mathcal R^0$ maps $u_b+X$ into $L^2(0,1)$ and the functional
$J^0$ defined in \eqref{def:J_eps} is finite.}

The result below gives a negative answer for the strong residual
functional considered here with transonic boundary data.
Its infimum is strictly positive and is not attained in the
admissible class. Every minimizing sequence converges uniformly
to a continuous function outside $H^1$, while its residuals
converge strongly in $L^2$ to a nonzero function.
The limiting profile is therefore not a weak solution.
When the boundary fluxes agree, an entropy-admissible stationary
shock exists, yet residual minimization does not recover it.
Under the parameter bound stated below, the accompanying loss
of regularity forces parameter escape.

\begin{theorem}[Loss of regularity for transonic data]
\label{thm:burgers-cusp}
Let $J^0$ be the functional defined in \eqref{def:J_eps}
with $g\equiv0$, and let $f$ satisfy the assumptions in \eqref{assumptions f conservation laws}.
For transonic boundary data $u_r<u_{\mathrm s}<u_\ell$, set
\[
\mathcal A:=\{u\in H^1(0,1):u(0)=u_\ell,\ u(1)=u_r\},
\]
\[
s:=f(u_\ell)+f(u_r),
\qquad
x^\dagger:=\frac{f(u_\ell)}{s}.
\]
Then:
\begin{enumerate}
\item (Infimum and minimizing sequences) The infimum of $J^0$ over
$\mathcal A$ is
\[
\inf_{u\in\mathcal A}J^0(u)=\tfrac12s^2.
\]
Every minimizing sequence $\{u_n\}\subset\mathcal A$ converges
uniformly on $[0,1]$ to
\[
u^\dagger(x):=
\begin{cases}
f_+^{-1}\!\bigl(s|x-x^\dagger|\bigr), & x\le x^\dagger,\\[2pt]
f_-^{-1}\!\bigl(s|x-x^\dagger|\bigr), & x>x^\dagger,
\end{cases}
\]
where $f_+^{-1}$ and $f_-^{-1}$ are the inverse branches of $f$ on
$[u_{\mathrm s},\infty)$ and $(-\infty,u_{\mathrm s}]$, respectively.
Moreover,
\[
\partial_x f(u_n)\longrightarrow
r^\dagger:=s\operatorname{sign}(x-x^\dagger)
\qquad\text{strongly in }L^2(0,1),
\]
with
\[
u^\dagger\in C^0([0,1])\setminus H^1(0,1),
\qquad
r^\dagger\in L^2(0,1)\setminus H^1(0,1),
\]
and $\|u_n'\|_{L^2(0,1)}\to\infty$.
In particular, the infimum is not attained in $\mathcal A$.

\item (Parameter escape) Let $\mathcal U=\{u_\theta:\theta\in\mathbb R^N\}\subset\mathcal A$
be a parameterized family as in \eqref{space of parametrized functions} such that
\[
\sup_{|\theta|\le R}\|u_\theta'\|_{L^2(0,1)}<\infty
\qquad\text{for every }R>0.
\]
If $J^0(u_{\theta_n})\to\inf_{u\in\mathcal A}J^0(u)$, then
$|\theta_n|\to\infty$.
\end{enumerate}
\end{theorem}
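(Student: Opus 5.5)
The plan is to change variables from $u$ to the flux $h:=f(u)$. For $u\in\mathcal A$, the function $h$ is absolutely continuous with $h'=\partial_x f(u)\in L^2$, so $J^0(u)=\tfrac12\|h'\|_{L^2}^2$. By the normalization $f(u_{\mathrm s})=0=\min f$, we have $h\ge0$, $h(0)=a:=f(u_\ell)>0$ and $h(1)=b:=f(u_r)>0$. Since $u$ is continuous and passes from $u_\ell>u_{\mathrm s}$ to $u_r<u_{\mathrm s}$, it attains $u_{\mathrm s}$ at some point $x_0\in(0,1)$, where $h(x_0)=0$.

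\emph{Lower bound on the loss.} On $(0,x_0)$ the mean of $h'$ is $-a/x_0$, and on $(x_0,1)$ it is $b/(1-x_0)$. The orthogonal decomposition of $h'$ into its piecewise mean and a mean-zero part gives the exact identity
\[
\|h'\|_{L^2}^2=\frac{a^2}{x_0}+\frac{b^2}{1-x_0}+\int_0^1|h'-m_{x_0}|^2\,dx,
\qquad m_{x_0}:=-\tfrac{a}{x_0}\1_{(0,x_0)}+\tfrac{b}{1-x_0}\1_{(x_0,1)}.
\]
By Cauchy--Schwarz, $\frac{a^2}{x_0}+\frac{b^2}{1-x_0}\ge(a+b)^2=s^2$, with equality exactly when $x_0=a/s=x^\dagger$. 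This yields $\inf J^0\ge\tfrac12 s^2$.

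\emph{Upper bound.} I would construct explicit competitors. Take $h_\delta=s|x-x^\dagger|$ outside a $\delta$-neighbourhood of $x^\dagger$, and inside it take a profile behaving like $(x-x^\dagger)^2$, matched in a $C^1$ way. Set $u_\delta=f_\pm^{-1}(h_\delta)$ on the appropriate sides. Because $f''(u_{\mathrm s})>0$, the inverse branches behave like $u_{\mathrm s}\pm c\sqrt{h}$ near $0$. Hence $u_\delta$ is Lipschitz near $x^\dagger$ and lies in $\mathcal A$, while $\|h_\delta'\|_{L^2}^2\to s^2$. Checking this $H^1$ membership is routine but should be written out.

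\emph{Convergence of minimizing sequences.} Let $u_n$ be minimizing, and choose zeros $x_n$ of $h_n=f(u_n)$. The identity above forces two things: $\frac{a^2}{x_n}+\frac{b^2}{1-x_n}\to s^2$, so $x_n\to x^\dagger$; and $\|h_n'-m_{x_n}\|_{L^2}\to0$. Since $m_{x_n}\to r^\dagger$ in $L^2$, we get $h_n'\to r^\dagger$ strongly, which is the residual statement. Integrating from $0$ with $h_n(0)=a$ then gives $h_n\to s|x-x^\dagger|$ uniformly.

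The main obstacle is recovering $u_n$ from $h_n$, because $f$ is not injective and the correct branch must be identified. I would handle it as follows. For fixed $\eta>0$ and large $n$, $h_n>0$ on $[0,x^\dagger-\eta]\cup[x^\dagger+\eta,1]$, so $u_n-u_{\mathrm s}$ has no zero there. The boundary values then fix its sign: $u_n>u_{\mathrm s}$ on the left piece and $u_n<u_{\mathrm s}$ on the right. Therefore $u_n=f_\pm^{-1}(h_n)$ on these pieces. Uniform continuity of $f_\pm^{-1}$ on bounded sets gives $u_n\to u^\dagger$ uniformly there. On the middle interval, $h_n\le s\eta+o(1)$ implies $|u_n-u_{\mathrm s}|\le\omega(s\eta)$, where $\omega$ is the modulus of continuity of the inverse branches at $0$; $u^\dagger$ satisfies the same bound. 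Letting $\eta\to0$ yields uniform convergence on $[0,1]$.

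\emph{Regularity and part (ii).} $u^\dagger\notin H^1$ because $|u^{\dagger\prime}|\sim c|x-x^\dagger|^{-1/2}$ near $x^\dagger$. $r^\dagger\notin H^1$ because it has a jump. If $\|u_n'\|_{L^2}$ had a bounded subsequence, weak compactness together with the uniform convergence would place $u^\dagger$ in $H^1$, a contradiction; hence $\|u_n'\|_{L^2}\to\infty$. A minimizer would give a constant minimizing sequence, whose limit would then be $u^\dagger\notin\mathcal A$, so the infimum is not attained. For (ii), if $|\theta_n|\not\to\infty$, a subsequence satisfies $|\theta_n|\le R$. The hypothesis then bounds $\|u_{\theta_n}'\|_{L^2}$ along this subsequence, which is itself minimizing; this contradicts the blow-up in (i).
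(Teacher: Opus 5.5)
Your proposal is correct and follows essentially the paper's proof: your piecewise-mean decomposition is the paper's three-point interpolant identity (since $m_{x_0}=q_{x_0}'$), and your Cauchy--Schwarz equality case replaces the paper's explicit deficit $s^2(x_0-x^\dagger)^2/(x_0(1-x_0))$, while the branch identification, the weak-compactness blow-up argument and part (ii) match the paper's steps. The only deviation is that you build the recovery sequence at the flux level rather than by linearly interpolating $u^\dagger$ on $[x^\dagger-\delta,x^\dagger+\delta]$. This works because a profile vanishing quadratically at $x^\dagger$ gives $|h_\delta'|\lesssim\sqrt{h_\delta}$, and together with $|f'(u)|\gtrsim\sqrt{f(u)}$ near $u_{\mathrm s}$ this makes $u_\delta$ Lipschitz; the cubic $h_\delta=s\,t^2(2-t/\delta)/\delta$ with $t=|x-x^\dagger|$ gives the $C^1$ match.
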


\begin{proof}
\textup{(i)} Write $A:=f(u_\ell)>0$ and $B:=f(u_r)>0$, so $s=A+B$.
For any $u\in\mathcal A$, the flux $w=f(u)\in H^1(0,1)$ satisfies
$w(0)=A$ and $w(1)=B$. Continuity and the transonic boundary data
also give a point $a\in(0,1)$ with $u(a)=u_{\mathrm s}$, hence $w(a)=0$.

For fixed $a$, the least Dirichlet energy among $H^1$ functions
with these three prescribed values is attained by the piecewise
affine interpolant $q_a$ through $(0,A)$, $(a,0)$, and $(1,B)$.
Indeed, $w-q_a$ vanishes at $0,a,1$, while $q_a'$ is constant on
each subinterval. The cross terms therefore vanish, giving
\[
2J^0(u)=\|w'\|_{L^2}^2
=\frac{A^2}{a}+\frac{B^2}{1-a}
 +\|(w-q_a)'\|_{L^2}^2.
\]
Since $x^\dagger=A/s$, elementary algebra yields
\begin{equation}\label{eq:cusp-lower-bound}
2J^0(u)-s^2
=\|(w-q_a)'\|_{L^2}^2
 +\frac{s^2(a-x^\dagger)^2}{a(1-a)}.
\end{equation}
Thus $J^0(u)\ge\tfrac12s^2$ for every $u\in\mathcal A$.

Set $w^\dagger:=q_{x^\dagger}=s|x-x^\dagger|=f(u^\dagger)$.
To approach the lower bound, choose
$0<\delta<\min(x^\dagger,1-x^\dagger)$ and replace $u^\dagger$ on
$[x^\dagger-\delta,x^\dagger+\delta]$ by the straight line joining
its endpoint values. The resulting function $u_\delta$ belongs to
$\mathcal A$: for each fixed $\delta$, its pieces join continuously
and have square-integrable derivatives.
On the modified interval, Taylor expansion at $u_{\mathrm s}$ gives
\[
|f'(u_\delta)|=O(\delta^{1/2}),
\qquad
|u_\delta'|=O(\delta^{-1/2}).
\]
Hence $\partial_x f(u_\delta)$ is uniformly bounded there, while
its magnitude equals $s$ elsewhere. Consequently,
$J^0(u_\delta)=\tfrac12s^2+O(\delta)$, and
\[
\tfrac12s^2
\le\inf_{u\in\mathcal A}J^0(u)
\le\lim_{\delta\to0}J^0(u_\delta)
=\tfrac12s^2.
\]
This proves the asserted value of the infimum.

Now let $\{u_n\}\subset\mathcal A$ be any minimizing sequence,
set $w_n=f(u_n)$, and choose $a_n\in(0,1)$ with
$u_n(a_n)=u_{\mathrm s}$. Identity~\eqref{eq:cusp-lower-bound} gives
\[
a_n\to x^\dagger,
\qquad
\|w_n'-q_{a_n}'\|_{L^2}\to0.
\]
Since $q_{a_n}'\to(w^\dagger)'$ in $L^2$ and
$w_n(0)=w^\dagger(0)$, we obtain $w_n\to w^\dagger$ in $H^1$ and
uniformly. This also proves the asserted residual convergence.
The limiting residual has a nonzero jump at $x^\dagger$ and therefore
does not belong to $H^1$.

Outside any fixed neighborhood of $x^\dagger$, the limiting flux
is bounded away from zero. For large $n$, continuity and the boundary
values therefore force $u_n$ to use the same inverse branches as
$u^\dagger$, giving uniform convergence there.
To control a neighborhood of $x^\dagger$, fix $\varepsilon>0$.
Since both inverse branches are continuous at zero and take the value
$u_{\mathrm s}$ there, choose $\eta>0$ such that
\[
|f_\pm^{-1}(y)-u_{\mathrm s}|<\varepsilon/2
\qquad\text{for }0\le y\le\eta.
\]
Choose $0<\rho<\min(x^\dagger,1-x^\dagger)$ with $s\rho<\eta/2$.
For all sufficiently large $n$, we have
$\|w_n-w^\dagger\|_{L^\infty}<\eta/2$; hence, whenever
$|x-x^\dagger|\le\rho$,
\[
0\le w_n(x)\le s\rho+\|w_n-w^\dagger\|_{L^\infty}<\eta,
\qquad
0\le w^\dagger(x)\le s\rho<\eta.
\]
Both $u_n(x)$ and $u^\dagger(x)$ therefore lie within
$\varepsilon/2$ of $u_{\mathrm s}$, regardless of which inverse branch
$u_n$ uses in this neighborhood. The triangle inequality gives
\[
\sup_{|x-x^\dagger|\le\rho}|u_n(x)-u^\dagger(x)|<\varepsilon.
\]
Together with the uniform convergence on
$[0,x^\dagger-\rho]\cup[x^\dagger+\rho,1]$ established above,
this proves uniform convergence on $[0,1]$.

Taylor expansion at $u_{\mathrm s}$ gives
\[
|(u^\dagger)'(x)|
\sim\sqrt{\frac{s}{2f''(u_{\mathrm s})}}\,
|x-x^\dagger|^{-1/2}
\qquad\text{as }x\to x^\dagger.
\]
Thus $u^\dagger\notin H^1(0,1)$.
If a subsequence of $\{u_n\}$ had bounded derivative norms, the
fixed boundary values would make it bounded in $H^1$.
A further subsequence would then converge weakly in $H^1$;
its uniform limit identifies the weak limit as $u^\dagger$,
a contradiction. Hence $\|u_n'\|_{L^2}\to\infty$.
Finally, an admissible minimizer would generate a constant minimizing
sequence, whose uniform convergence to $u^\dagger\notin\mathcal A$
is impossible. Therefore the infimum is not attained.

\textup{(ii)} If $\{\theta_n\}$ had a bounded subsequence,
the assumed bound would give a corresponding subsequence with
bounded derivative norms, contradicting part~\textup{(i)}.
Hence $|\theta_n|\to\infty$.
\end{proof}

Reflection preserves the residual functional, giving the following
result for reversed transonic data.

\begin{proposition}[Residual minimization for reversed transonic data]
\label{prop:reverse-transonic}
Under the flux assumptions of Theorem~\ref{thm:burgers-cusp},
let $g\equiv0$ and minimize $J^0$ over $H^1(0,1)$ functions with
endpoint values $u_\ell<u_{\mathrm s}<u_r$. Then:
\begin{enumerate}
\item (Minimizing sequences)  Every minimizing sequence converges uniformly on $[0,1]$
to the same strictly increasing continuous function with an
interior cusp. This limit lies outside $H^1(0,1)$, and the parameter-escape conclusion of
Theorem~\ref{thm:burgers-cusp}\textup{(ii)} holds under the
same bounded-parameter assumption.

\item (Entropy comparison) The limit is not a stationary weak
solution. 
\end{enumerate}
\end{proposition}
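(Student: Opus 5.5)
The plan is to reduce to Theorem~\ref{thm:burgers-cusp} by the reflection $x\mapsto 1-x$. For $u\in H^1(0,1)$ set $\tilde u(x):=u(1-x)$. Then $\partial_x f(\tilde u)(x)=-(\partial_x f(u))(1-x)$, so $J^0(\tilde u)=J^0(u)$. Reflection is an isometric bijection of $H^1(0,1)$ that exchanges the endpoint values. It therefore maps the admissible class with data $u_\ell<u_{\mathrm s}<u_r$ onto the class $\mathcal A$ of Theorem~\ref{thm:burgers-cusp}, whose left value is $u_r>u_{\mathrm s}$ and right value is $u_\ell<u_{\mathrm s}$. In particular it maps minimizing sequences to minimizing sequences.

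\textbf{Part (i).} Let $\{u_n\}$ be minimizing for the reversed data. Then $\{\tilde u_n\}$ is minimizing in $\mathcal A$, and Theorem~\ref{thm:burgers-cusp}(i) gives $\tilde u_n\to u^\dagger$ uniformly, with $s=f(u_\ell)+f(u_r)$ and $x^\dagger=f(u_r)/s$. Hence $u_n\to u^\ddagger(x):=u^\dagger(1-x)$ uniformly. The limit $u^\dagger$ is strictly decreasing, since it follows $f_+^{-1}$ of a decreasing argument on the left and $f_-^{-1}$ of an increasing argument on the right. So $u^\ddagger$ is strictly increasing and continuous. Its interior cusp sits at $1-x^\dagger$, where $|(u^\ddagger)'|\sim c|x-(1-x^\dagger)|^{-1/2}$, so $u^\ddagger\notin H^1$.

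For parameter escape, the family $\{\tilde u_\theta\}$ satisfies the same bound $\sup_{|\theta|\le R}\|\tilde u_\theta'\|_{L^2}<\infty$, because reflection preserves the $L^2$ norm of the derivative. Theorem~\ref{thm:burgers-cusp}(ii), applied to the reflected family, then gives $|\theta_n|\to\infty$.

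\textbf{Part (ii).} The goal is to show $u^\ddagger$ is not a stationary weak solution, i.e.\ $\partial_x f(u^\ddagger)\neq0$ in $\mathcal D'(0,1)$. Since $f(u^\ddagger(x))=s|x-(1-x^\dagger)|$ is continuous and not constant (because $s>0$), its distributional derivative is the nonzero function $-s\operatorname{sign}(x-(1-x^\dagger))$.

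Two points make this comparison meaningful:
\begin{itemize}
\item Unlike the compressive case, no entropy-admissible stationary shock connects $u_\ell<u_{\mathrm s}<u_r$. For convex $f$, a shock from left state $u_-$ to right state $u_+$ requires $u_->u_+$ by the Lax condition, so any jump from the low branch to the high branch would be expansive.
\item Even with $f(u_\ell)=f(u_r)$, the Rankine--Hugoniot-compatible jump at the endpoint traces is inadmissible.
\end{itemize}

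The only step needing care is this entropy comparison. I would phrase it minimally: the limit fails the weak equation outright, which suffices for the stated claim. The remark about nonexistence of an admissible shock is supplementary context, not needed for the proof.
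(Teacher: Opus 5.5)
Your proposal is correct and follows the paper's proof. Part (i), including parameter escape, reduces to Theorem~\ref{thm:burgers-cusp} through the reflection $x\mapsto 1-x$, and part (ii) follows because the limiting flux $s|x-(1-x^\dagger)|$ is nonconstant, so the limit is not a stationary weak solution. One harmless slip: the distributional derivative of $s|x-(1-x^\dagger)|$ is $+s\operatorname{sign}(x-(1-x^\dagger))$, not its negative, and this does not affect the conclusion.
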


\begin{proof}
\textup{(i)} The reflection $v(x)=u(1-x)$ exchanges the endpoint
values and preserves both $J^0$ and the $H^1$ seminorm. Applying
Theorem~\ref{thm:burgers-cusp} and reflecting its limiting
profile gives all the assertions in part~\textup{(i)}.

\textup{(ii)} Similarly, the limiting function is strictly increasing and the flux is therefore non-constant throughout the interval. Therefore, this function cannot be a stationary weak solution. 
\end{proof}



\begin{example}[Burgers equation with transonic boundary data]
\label{ex:burgers-transonic}
For the Burgers flux $f(u)=u^2/2$ and boundary data $u(0)=1$,
$u(1)=-1$, Theorem~\ref{thm:burgers-cusp} gives
$x^\dagger=\tfrac12$ and $\inf J^0=\tfrac12$.
Every minimizing sequence converges uniformly to
\[
u^\dagger(x)=\operatorname{sign}\bigl(\tfrac12-x\bigr)
\sqrt{|1-2x|},
\]
whose residual equals $-1$ on $(0,\tfrac12)$ and $+1$ on
$(\tfrac12,1)$.

To apply the result to neural networks, consider the trial family
\begin{equation}\label{eq:hardwired-ansatz}
u_\theta(x)=(1-2x)+x(1-x)\Psi_\theta(x),
\qquad
\Psi_\theta=\sum_{j=1}^m c_j\sigma(a_jx+b_j),
\quad
\sigma=\tanh,
\end{equation}
which satisfies the boundary conditions exactly. Since
$|\sigma|\le1$ and $|\sigma'|\le1$,
\[
\|u_\theta'\|_{L^2}\le\|u_\theta'\|_{L^\infty}
\le 2+\sum_j|c_j|+\tfrac14\sum_j|c_j||a_j|.
\]
Let $\mathcal N=\bigcup_{m\ge1}\mathcal U_m$ be the union of these families over all finite widths, where $\mathcal U_m$ is the family \eqref{eq:hardwired-ansatz} of width $m$, with $N=3m$ parameters.
Every smooth admissible function $v$ can be written as
\[
v(x)=1-2x+x(1-x)q(x),\qquad q\in C^\infty([0,1]).
\]
The $C^1$ density of finite sums of $\tanh$
units~\cite{pinkus1999approximation} therefore gives $C^1$
approximation of $v$ by functions in $\mathcal N$.
Smooth approximation preserving the boundary values and continuity
of $J^0$ in $H^1(0,1)$ then yield
\[
\inf_{u\in\mathcal N}J^0(u)=\tfrac12.
\]
The infimum is not attained, by Theorem~\ref{thm:burgers-cusp}(i).

The derivative bound is independent of the width. For any sequence
$u_{\theta_n}\in\mathcal N$ of arbitrary finite widths $m_n$ with
$J^0(u_{\theta_n})\to\tfrac12$, Theorem~\ref{thm:burgers-cusp}(i)
gives $\|u_{\theta_n}'\|_{L^2}\to\infty$, and hence
\[
\sum_{j=1}^{m_n}|c_{j,n}|\bigl(1+|a_{j,n}|\bigr)
\ge\|u_{\theta_n}'\|_{L^2}-2\longrightarrow\infty.
\]

\smallskip\noindent\textit{Numerical results.}
Figure~\ref{fig:burgers-escape} illustrates the minimizing-sequence behavior
of Theorem~\ref{thm:burgers-cusp} using the
ansatz \eqref{eq:hardwired-ansatz}, with $m=30$ and the hardwired boundary conditions. We use full-batch Adam with learning
rate $10^{-3}$, parameters $(0.9,0.999,10^{-8})$, 
and $2\times10^5$ iterations. Training minimizes a $401$-point
composite-trapezoidal approximation of $J^0$. The viscous comparison
minimizes $J^\varepsilon$ with $\varepsilon=0.05$, using the same
architecture and optimization parameters.

\begin{figure}[htbp]
\centering
\begin{tabular}{cc}
\includegraphics[width=0.42\textwidth]{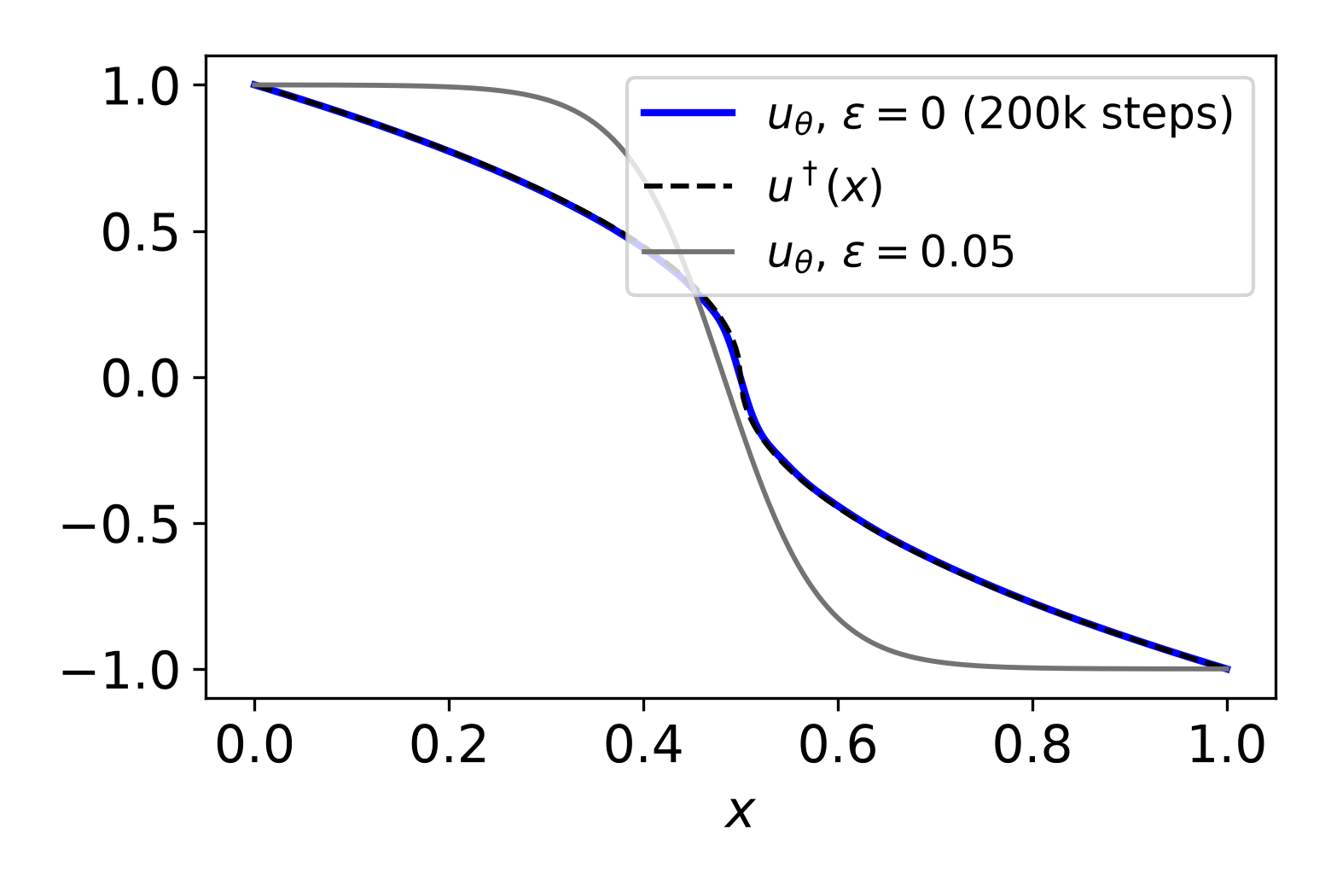}&
\includegraphics[width=0.42\textwidth]{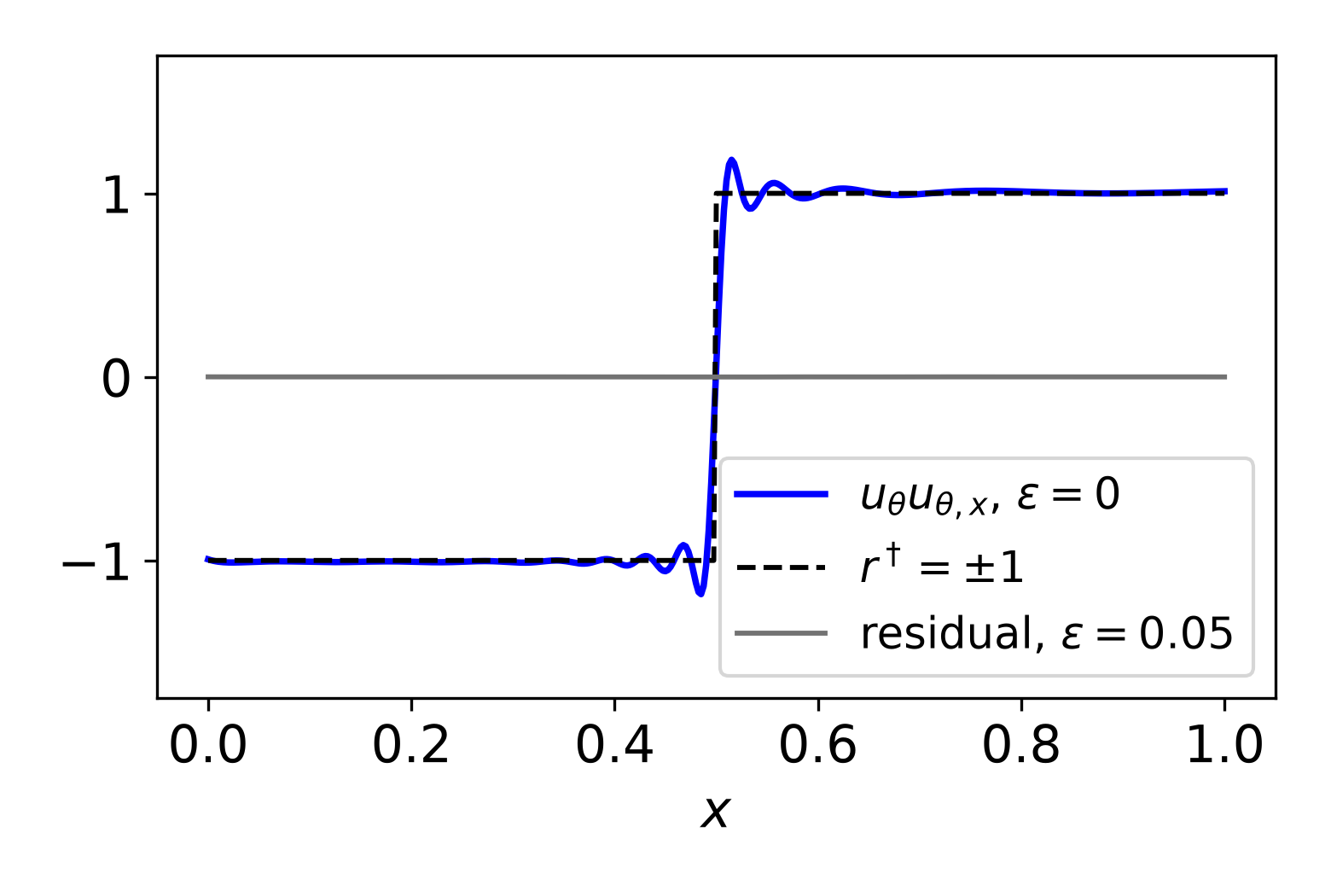}\\
\includegraphics[width=0.42\textwidth]{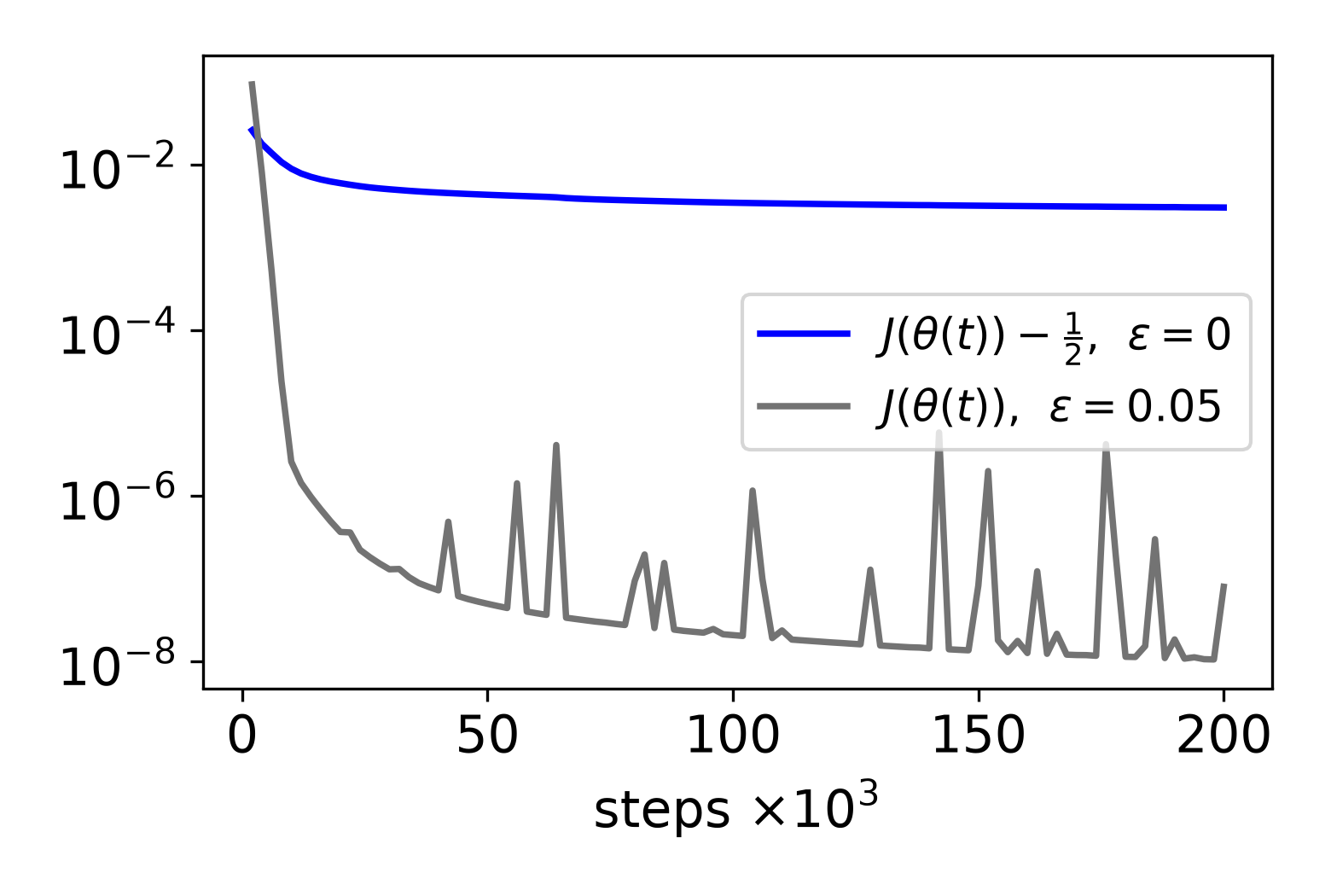}&
\includegraphics[width=0.42\textwidth]{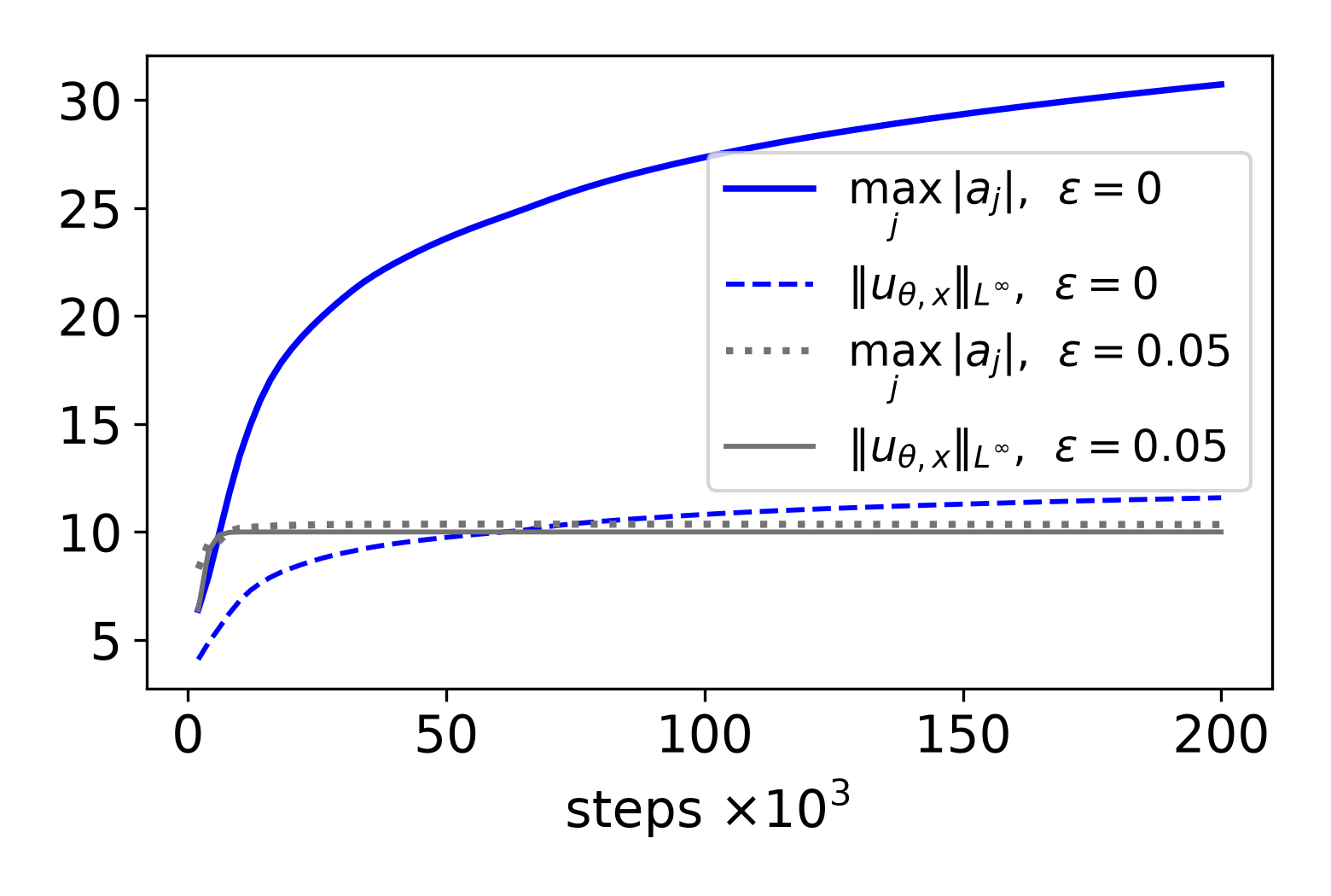}
\end{tabular}
\caption{Training a width-$30$ $\tanh$ network for the inviscid Burgers residual (blue) and the equation with viscosity $\varepsilon=0.05$ (gray). Top left: the trained functions and the predicted inviscid limit $u^\dagger$. Top right: the residuals and the predicted square wave. Bottom left: loss histories, with $1/2$ subtracted in the inviscid case. Bottom right: the inner-weight maxima and derivative maxima, showing continued growth in the inviscid run and stabilization in the viscous run.}
\label{fig:burgers-escape}
\end{figure}

In the inviscid run, the loss decreases toward the predicted infimum
$\tfrac12$, reaching approximately $0.503$, while the trial function
approaches $u^\dagger$ and the residual develops the predicted square-wave
profile with a narrow transition near $x=\tfrac12$. The maximum error
relative to $u^\dagger$ on a refined grid is about $0.05$. Refining the
evaluation grid from $401$ to $102{,}401$ points changes the inviscid loss
by less than $10^{-6}$, much less than its remaining gap of about $0.003$
above the infimum. The slow decay of the approximation error is consistent with the limited regularity of $u^{\dagger}$. 
The viscous run reaches a loss of approximately
$2.44\times10^{-8}$, and its inner weights and derivative maxima stabilize.

Two features deserve emphasis. First, the escape is slow: the observed
inner-weight growth is roughly logarithmic in the iteration count.
In practice, this appears as a plateau at a strictly positive loss while
the parameters continue to grow. Second, no singularity appears in
$u_\theta$ at any finite iteration: every iterate remains analytic, and
the computed profile approximates $u^\dagger$ closely. The loss controls
the derivative of the flux $f(u_\theta)$, while the change in regularity
appears in derivative norms of $u_\theta$ that the loss does not control.

For small viscosity, a narrow shock layer can concentrate contributions
to the gradient and increase the variance under uniform sampling.
Reliable updates may then require larger batches or appropriately
weighted sampling concentrated near the layer.
\end{example}

\Needspace{6\baselineskip}
\section{{Fully nonlinear equations}}\label{sec:fully-nonlinear}

The preceding section asks whether minimizing sequences remain controlled in function space. We now examine the role of the solution concept in interpreting the residual of two fully nonlinear equations. 

{For Monge--Amp\`ere, uniform convexity makes the linearized Dirichlet test problem elliptic and eliminates spurious function-space critical points. The convexity constraint also selects the intended branch of the equation: in two dimensions the interior expression alone cannot distinguish $u$ from $-u$, since $\det D^2(-u)=\det D^2u$. Boundary data may distinguish these functions, but convexity remains part of the elliptic solution class. 

For Hamilton--Jacobi equations, even a vanishing residual can leave multiple pointwise almost-everywhere solutions. The eikonal example shows explicitly how the viscosity condition imposes information at kinks that is absent from the $L^2$ residual.}

\subsection{The Monge--Amp\`ere equation}
\label{subsec:monge-ampere}

 The Monge--Amp\`ere equation is fully nonlinear in the Hessian. In two dimensions, its operator also has the divergence representation $\det D^2u=\tfrac12\nabla\cdot(\operatorname{cof}(D^2u)\nabla u)$ for smooth $u$. Its linearization at a uniformly convex function is a uniformly elliptic operator in divergence form. The corresponding Dirichlet problem therefore provides a two-dimensional application of the dense-range criterion.

Consider the Dirichlet problem for the elliptic Monge--Amp\`ere equation on
\[
\Omega:=(0,1)^2:
\qquad
\det D^2u = f(x,y)
\quad \text{in }\Omega,
\qquad
u=\varphi
\quad \text{on }\partial\Omega,
\]
where $f\in L^2(\Omega)$ and $f>0$ almost everywhere,
together with the convexity constraint
\[
u \text{ is convex in } \Omega.
\]
We consider the least-squares functional
\[
J(u)
:=
\frac12\int_\Omega \big(\det D^2u-f\big)^2\,dx,
\]
{that is, the loss functional of Section~\ref{subsec:tiers} for the residual map $\mathcal R(u)=\det D^2u-f$, with $Y=L^2(\Omega)$, with $X=\{v\in C^2(\overline\Omega):v=0\text{ on }\partial\Omega\}$, and with trial functions in $u_b+X$ for a fixed $u_b\in C^2(\overline\Omega)$ of trace $\varphi$. Then $\mathcal R\in C^1(X;Y)$, with $D\mathcal R(u)[v]=DF(u)[v]=\operatorname{cof}(D^2u):D^2v$ for $v\in X$.}

Least-squares and penalty formulations of the Monge--Amp\`ere equation, including the necessity of strictly enforcing convexity, have been developed in the numerical literature \cite{brenner2011c0,caboussat2013least,prins2015least,brenner2024nonlinear}.

{Input-convex neural networks impose convexity of $x\mapsto u_\theta(x)$
through architectural constraints~\cite{amos2017input}.
Nystr{\"o}m and Vestberg~\cite{nystrom2023dirichlet} use smooth input-convex
networks for the Dirichlet Monge--Amp\`ere problem. However, architectural
convexity alone does not supply the positive lower Hessian bound
assumed below or guarantee convexity of the loss in $\theta$.}

Let \(\bar u\in C^3(\overline\Omega)\) be a critical point of
\(J\) with the prescribed boundary trace \(\varphi\), and define
\[
r:=\det D^2\bar u-f\in L^2(\Omega).
\]
Assume moreover that \(\bar u\) is uniformly convex. Since
\(\bar u\in C^3(\overline\Omega)\) and \(\overline\Omega\) is compact, there
are constants \(0<\lambda\le\Lambda<\infty\) such that
\[
\lambda I\le D^2\bar u(x)\le \Lambda I
\qquad \text{for all }x\in \overline{\Omega}.
\]
Then sufficiently small \(C^2\) perturbations preserve convexity, so the
first variation may be tested against arbitrary smooth variations \(v\)
with \(v=0\) on \(\partial\Omega\); since
\(v\mapsto DJ(\bar u)[v]\) is continuous in the \(H^2\)
norm and smooth Dirichlet variations are dense, stationarity extends to
every \(v\in H^2(\Omega)\cap H_0^1(\Omega)\).
{This extends the first-variation identity, rather than the nonlinear residual map, to $H^2$ variations.}

Set $A:=\operatorname{cof}(D^2\bar u)$. The matrix $A$ is symmetric,
and a direct calculation shows that its rows are divergence free.
Thus
\[
DF(\bar u)[v]
=A:D^2v
=\nabla\cdot(A\nabla v).
\]
In two dimensions, the eigenvalues of $A$ are those of $D^2\bar u$
interchanged, so uniform convexity gives
\[
\lambda I\le A\le\Lambda I
\qquad\text{on }\overline\Omega.
\]
Moreover, $\bar u\in C^3(\overline\Omega)$ ensures that
$A\in C^1(\overline\Omega)$.

For arbitrary $\tilde g\in L^2(\Omega)$, consider the Dirichlet test
problem
\[
\nabla\cdot(A\nabla v)=\tilde g
\quad\text{in }\Omega,
\qquad
v=0\quad\text{on }\partial\Omega.
\]
Uniform ellipticity gives a unique weak solution $v\in H_0^1(\Omega)$.
Global $H^2$ regularity for this problem on the convex polygon gives
$v\in H^2(\Omega)$; see~\cite{grisvard1985elliptic}. This solution can
therefore be used in the extended stationarity identity, yielding
\[
0
=\int_\Omega r\,\nabla\cdot(A\nabla v)\,dx
=\int_\Omega r\,\tilde g\,dx
\qquad\forall\,\tilde g\in L^2(\Omega).
\]
Taking $\tilde g=r$ gives $r=0$. Thus every uniformly convex critical
point of class $C^3$ solves the Monge--Amp\`ere equation.

The argument uses only $r\in L^2(\Omega)$ and does not differentiate
$r$. Uniform convexity plays here the role that viscosity plays in
Proposition~\ref{prop:viscous-inviscid}(i): it provides the solvability
of the test problem.

Without uniform convexity, the cofactor matrix may become degenerate;
if convexity is also dropped, it may be indefinite. Solvability of the
test problem is then no longer assured.
Spurious critical points can occur even when a smooth uniformly convex
solution exists. For this example, replace the square $(0,1)^2$ by the
unit disk $\Omega=B_1(0)\subset\mathbb R^2$, and take $f\equiv1$ and
homogeneous Dirichlet data. The function $u^*(x)=\tfrac12(|x|^2-1)$ is a uniformly convex
solution, whereas $\bar u=0$ satisfies
\[
r=-1,\qquad \operatorname{cof}(D^2\bar u)=0,\qquad
DJ(0)[v]=0
\]
for every smooth Dirichlet variation $v$, although $J(0)=\pi/2>0$.
Thus $\bar u=0$ is a spurious critical point. It is convex but not
uniformly convex, so convexity alone does not exclude this degeneracy.

For a finite-dimensional parameterization, the conclusion that uniformly convex function-space critical points solve the equation does not transfer automatically. Proposition~\ref{prop:parametric-fredholm} replaces the full image $D\mathcal R(u_{\theta^*})[X]$ by $D\mathcal R(u_{\theta^*})[\mathcal V_{\theta^*}]$; the latter is finite-dimensional for a fixed finite model and therefore cannot be dense in $L^2(\Omega)$. The function-space result should consequently be read as identifying the PDE mechanism that eliminates spurious critical points when all admissible variations are available. Whether a particular parameterized or collocation discretization inherits that property is a separate approximation and rank question, exactly as in Sections~\ref{subsec:parametric-functions} and~\ref{sec:linear-pdes}.

\subsection{Hamilton--Jacobi equations: zero residual without viscosity selection}
\label{subsec:hj-ae}

The preceding analysis shows that critical points can have nonzero
residuals and that minimizing sequences can lose the required regularity.
Hamilton--Jacobi equations raise a further question: even a function with
zero residual may fail to be the intended solution. The equation can hold
almost everywhere without satisfying the viscosity-solution criterion.

Consider
\[
F(x;u)=H(u_x)-g(x),
\qquad
J(u)=\frac12\int_0^1F(x;u)^2\,dx,
\]
over Lipschitz functions with prescribed Dirichlet data and define
\[
S:=\{u\in\operatorname{Lip}[0,1]:H(u_x)=g\ \text{a.e.},\ u(0)=u_0,\ u(1)=u_1\}{\subset u_b+X},
\]
where $X=\{v\in W^{1,\infty}(0,1):v(0)=v(1)=0\}.$
Every $u\in S$ has $F(\cdot;u)=0$ in $L^2$ and hence $J(u)=0$.

For the eikonal problem $|u_x|^2=1$ with $u(0)=u(1)=0$, the set $S$ contains a continuum of sawtooth profiles with slopes $\pm1$, whereas the viscosity solution is the distance function $\min(x,1-x)$ \cite{crandall1992user}. 

The first variation confirms that no hidden stationarity condition repairs
this loss of selection. Assume \(g\in C^1([0,1])\), let \(H\in C^2\) on a
neighborhood of the essential range of \(u_x\), and {suppose that $u$ is $C^2$ up to the closure of each cell $I_j$ of a finite partition containing its kinks.}

Set $r(x):=F(x;u)=H(u_x(x))-g(x)$. Then
\begin{equation}\label{eq:hj-cellwise-variation}
DJ(u)[v]
=
-\sum_j\int_{I_j}\partial_x\bigl(rH'(u_x)\bigr)v\,dx
-
\sum_i\jump{rH'(u_x)}_{x_i}v(x_i),
\end{equation}
 for variations $v\in X$, that is, $v\in W^{1,\infty}(0,1)$ with $v(0)=v(1)=0$. 
 
Stationarity requires $\partial_x(rH'(u_x))=0$ on each cell and
$\jump{rH'(u_x)}_{x_i}=0$ at each interface. Writing
$p_\pm=u_x(x_i^\pm)$, the interface condition reads
\[
\bigl(H(p_-)-g(x_i)\bigr)H'(p_-)
=
\bigl(H(p_+)-g(x_i)\bigr)H'(p_+).
\]
On $S$, $r$ vanishes identically on every cell, so both the
cellwise and interface conditions hold automatically and impose
no ordering on the one-sided slopes. For the eikonal equation
$|u_x|=1$, stationarity therefore permits both a local maximum,
with $(p_-,p_+)=(1,-1)$, and a local minimum, with
$(p_-,p_+)=(-1,1)$.

The viscosity condition distinguishes these two corners.
At a local minimum, the constant test function touching from below
has derivative zero and violates the supersolution inequality
$|\phi_x|-1\ge0$, whereas the downward corner satisfies the
viscosity conditions. Within the piecewise-linear zero-residual
class, semiconcavity excludes upward slope jumps and, together
with $u(0)=u(1)=0$, selects the distance profile
$u(x)=\min(x,1-x)$. This characterization by semiconcavity is
specific to the eikonal example considered here. More generally,
an additional admissibility criterion is needed to identify the
intended solution among zero-residual functions. It may be
enforced through an appropriate constraint, a vanishing-viscosity
approximation, or a consistent monotone discretization with the
stability, comparison, and residual estimates required to justify
convergence to the viscosity solution
\cite{crandall1992user,esteve2025finite,barles1991convergence}.

\section{Minimal residual regularity and consistency of sampled gradients}
\label{sec:piecewise-training}

The minimal spatial regularity needed to define a strong residual
in $L^2$ ensures that the least-squares functional is well defined.
It does not, by itself, ensure that gradients obtained by
differentiating sampled losses consistently approximate the gradient
of the continuous functional. We return to this distinction,
introduced in Section~\ref{subsec:sampled-gradient}, and examine
the role of the trial family's parameter dependence.

For piecewise-smooth parameterized trial families, we first derive
the first variation on a fixed partition and then consider
interfaces that move with the parameters. The cellwise adjoint
calculations concern second-order operators, while the comparison
of sampled and continuous gradients is formulated directly in
terms of the residual and therefore also applies to higher-order
equations under the stated hypotheses.

\subsection{Cellwise adjoints and interface conditions}
\label{subsec:piecewise-setup}

Fix a partition
\[
\Pi=\{0=x_0<x_1<\cdots<x_{N_c}=1\},
\]
and write $C^m_\Pi$ for functions whose restriction to each cell
$I_j=(x_{j-1},x_j)$ extends to a $C^m$ function on
$[x_{j-1},x_j]$, with no matching at the interfaces built into this
notation. The interior points $x_1,\ldots,x_{N_c-1}$ are the
interfaces. For a function $h$ with one-sided traces, write
\[
\jump{h}_{x_i}:=h(x_i^+)-h(x_i^-),
\qquad
\overline h_{x_i}:=\frac12\bigl(h(x_i^+)+h(x_i^-)\bigr).
\]
Consider a differential operator of the form \eqref{eq:F2},
\begin{equation}\label{eq:general-second-order-residual}
F(x;u)=\mathcal F(x,u,u_x,u_{xx})-g(x),
\end{equation}
where $\mathcal F=\mathcal F(x,z,p,q)$ is smooth. This includes operators
that depend nonlinearly on the highest derivative $u_{xx}$.
For $r:=F(\cdot;u)$, define the boundary and interface functional
\[
\mathcal S_u(r,v)
:= \sum_j\left[
\bigl\langle r,DF(u)[v]\bigr\rangle_{L^2(I_j)}
-
\bigl\langle DF(u)^*r,v\bigr\rangle_{L^2(I_j)}
\right].
\]

With all coefficients evaluated along $u$, write
\[
a:=\mathcal F_q,\qquad b:=\mathcal F_p,\qquad c:=\mathcal F_z.
\]
The linearization \eqref{eq:formal-linearization} and its cellwise formal adjoint are
\begin{equation}\label{eq:general-cellwise-adjoint}
DF(u)[v]=a v_{xx}+b v_x+c v,
\qquad
DF(u)^*r=(ar)_{xx}-(br)_x+cr.
\end{equation}
Then, through integration by parts, 
\[
\mathcal S_u(r,v)
= 
\sum_{j=1}^{N_c}
\Bigl[ar\,v_x+\bigl(br-(ar)_x\bigr)v\Bigr]_{x_{j-1}^+}^{x_j^-},
\]
containing only information across the cell interfaces. 
The following identity separates the cellwise adjoint equation from
the conditions on these traces.

\begin{proposition}[Cellwise adjoint identity and criticality]
\label{prop:piecewise-adjoint}
Let $F$ be given by \eqref{eq:general-second-order-residual}, and assume
\[
u\in C^1([0,1])\cap C^4_\Pi,
\qquad
g\in C^2_\Pi.
\]
Set $r:=F(\cdot;u)$ and $I_j:=(x_{j-1},x_j)$. Let $\mathcal V_\Pi$
be the space of all $v\in C^1([0,1])\cap C^2_\Pi$
satisfying the prescribed homogeneous endpoint conditions.
Then, for every $v\in\mathcal V_\Pi$,
\begin{equation}\label{eq:piecewise-adjoint-identity}
DJ(u)[v]
=
\sum_{j=1}^{N_c}\int_{I_j}(DF(u)^*r)\,v\,dx
+\mathcal S_u(r,v).
\end{equation}
Moreover, $u$ is a critical point of $J$ with respect to
$\mathcal V_\Pi$ if and only if both the cellwise adjoint equations
\begin{equation}\label{eq:cellwise-adjoint}
DF(u)^*r=0
\qquad\text{on each }I_j
\end{equation}
and the boundary and interface condition
\[
\mathcal S_u(r,v)=0
\qquad\text{for every }v\in\mathcal V_\Pi
\]
hold.
\end{proposition}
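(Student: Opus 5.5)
The proof has two parts: first the identity \eqref{eq:piecewise-adjoint-identity}, then the equivalence for criticality.

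\textbf{Identity.} For $v\in\mathcal V_\Pi$, the derivative of $J$ along $v$ is
\[
DJ(u)[v]=\langle r,DF(u)[v]\rangle_{L^2(0,1)}.
\]
This comes from differentiating $\varepsilon\mapsto \tfrac12\int(\mathcal F(x,u+\varepsilon v,u_x+\varepsilon v_x,u_{xx}+\varepsilon v_{xx})-g)^2$ under the integral sign cell by cell. On each closed cell the arguments range over a compact set, and $\mathcal F$ is smooth, so dominated convergence applies.

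Next we split the integral over the cells $I_j$. Adding and subtracting $\int_{I_j}(DF(u)^*r)v$ in each cell gives \eqref{eq:piecewise-adjoint-identity} directly from the definition of $\mathcal S_u$. The substantive check is regularity. Since $u\in C^4_\Pi$, the coefficients $a,b,c$ lie in $C^2_\Pi$. Since $g\in C^2_\Pi$, the residual $r$ also lies in $C^2_\Pi$. Hence $(ar)_{xx}$, $(br)_x$ and $cr$ are continuous up to each closed cell, and $DF(u)^*r$ is defined cellwise and integrable. The trace formula for $\mathcal S_u$ stated before the proposition then follows from two integrations by parts of $\int_{I_j} a r v_{xx}$ and one of $\int_{I_j} b r v_x$ on each cell. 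These are legitimate because $v\in C^2_\Pi$.

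\textbf{Criticality.} The direction ``$\Leftarrow$'' is immediate from the identity. For ``$\Rightarrow$'', assume $DJ(u)[v]=0$ for all $v\in\mathcal V_\Pi$.

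Test first with $v\in C_c^\infty(I_j)$, extended by zero. Such $v$ lie in $\mathcal V_\Pi$: they are $C^1$ globally, piecewise $C^2$, and vanish near the endpoints, so any homogeneous endpoint condition on $v$ and $v_x$ holds. For these $v$ all traces at partition points vanish, so $\mathcal S_u(r,v)=0$. Stationarity then gives $\int_{I_j}(DF(u)^*r)v=0$ for all such $v$. Since $DF(u)^*r$ is continuous on $I_j$, the fundamental lemma of the calculus of variations yields \eqref{eq:cellwise-adjoint} on each $I_j$. Substituting this back into the identity gives $\mathcal S_u(r,v)=DJ(u)[v]=0$ for every $v\in\mathcal V_\Pi$.

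The main point needing care is the first step: justifying differentiation of $J$ and the integrations by parts when the coefficients are only piecewise regular. This is exactly where the hypothesis $u\in C^4_\Pi$ is used, because $(ar)_{xx}$ requires $a$ to be $C^2$ on each cell, which costs two derivatives beyond $u_{xx}$. I would also note that the requirement $u\in C^1$ is not needed for these computations. It enters only because the cellwise traces of $u$ and $u_x$ must agree with the global trial function.
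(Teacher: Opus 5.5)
Your proof is correct and follows the paper's route: differentiate $J(u+tv)$ at $t=0$ cell by cell using the cellwise $C^2$ regularity of $r$, $a$, $b$, integrate by parts on each cell, use variations compactly supported in a single cell to get the cellwise adjoint equation, and read off the trace condition from the identity. Your remark that the identity is immediate from the definition of $\mathcal S_u$, with the integrations by parts needed only for its trace representation, refines the paper's presentation without changing the argument.
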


\begin{proof}
The regularity assumptions make $r$, $a$, and $b$ cellwise $C^2$.
Differentiating $J(u+t v)$ at $t=0$ gives the sum of the integrals
$\int r\,DF(u)[v]\,dx$ over the fixed cells. On each cell,
\[
\begin{aligned}
\int r(a v_{xx}+b v_x+c v)\,dx
&=\int\bigl((ar)_{xx}-(br)_x+cr\bigr)v\,dx\\
&\quad+\Bigl[ar\,v_x+\bigl(br-(ar)_x\bigr)v\Bigr].
\end{aligned}
\]
Summing gives \eqref{eq:piecewise-adjoint-identity}. Variations compactly
supported in a single cell yield \eqref{eq:cellwise-adjoint}, and the
remaining assertion follows from the identity.
\end{proof}

If $\mathcal V_\Pi$ consists of all $v\in C^1([0,1])\cap C^2_\Pi$ with
$v(0)=v(1)=0$, the boundary and interface conditions are
\begin{equation}\label{eq:general-adjoint-traces}
\begin{gathered}
(ar)(0)=(ar)(1)=0,\\
\jump{ar}_{x_i}=0,\qquad
\jump{br-(ar)_x}_{x_i}=0,\qquad 1\le i<N_c.
\end{gathered}
\end{equation}

The traces involve the residual multiplied by coefficients of the
linearized operator. For a fully nonlinear equation, these coefficients
can jump because they depend on $u_{xx}$, even when $u$ is globally
$C^1$. Concluding that the residual vanishes requires uniqueness for the
adjoint problem with the associated boundary and interface conditions,
or a separate dense-range argument. Stationarity with respect to
finitely many parameters does not, by itself, impose all these
conditions or the cellwise adjoint equation.

\subsection{{Parameter-dependent interfaces and bias in the sampled gradient}}
\label{subsec:sampled-bias}

We now consider a parameterized trial family $u_\theta$ whose smooth
cells can change as the parameters vary. For neural networks with
piecewise-smooth activations, the interfaces are preimages of the
activation breakpoints under the pre-activation maps. Changing the weights and biases can
therefore move the interface locations $x_i(\theta)$. For brevity, we call such interfaces moving interfaces.

We write the
parameter-dependent partition as
\[
\Pi_\theta=\{0=x_0<x_1(\theta)<\cdots<x_{N_c-1}(\theta)<x_{N_c}=1\}
\]
with $\theta\in\mathbb R^N$. The number $N_c$ of cells is
fixed locally in parameter space, and the interior interfaces
$x_i(\theta)$ are $C^1$. Write
$I_j(\theta):=(x_{j-1}(\theta),x_j(\theta))$, and use the jump and
average conventions of Section~\ref{subsec:piecewise-setup}.
Endpoint values denote traces from inside $[0,1]$.

The sampled gradient differentiates residual evaluations at fixed
spatial points. Differentiating the continuous loss also accounts
for motion of the cell boundaries. We use $J$ and $\hat J$ for the
continuous and sampled losses, and $G$ for the mean sampled gradient,
as in Section~\ref{subsec:sampled-gradient}.

\begin{proposition}[Bias of the sampled gradient]
\label{prop:sgd-bias}
Let $\Pi_\theta$ be the moving partition above. Assume that the
restriction of $r(x,\theta)$ to each moving cell is jointly $C^1$
in $x$ and $\theta$, up to the cell endpoints. The one-sided
traces at an interface may differ. Define
\[
J(\theta)=\frac12\int_0^1 r(x,\theta)^2\,dx.
\]
Then, for $\theta = (\theta_1,\cdots, \theta_N)$, 
the partial derivatives of $J(\theta)$ are
\begin{equation}\label{eq:sgd-bias}
\partial_{\theta_k}J(\theta)
=G_k(\theta)-\frac12\sum_{i=1}^{N_c-1}
\jump{r^2}_{x_i}\,\partial_{\theta_k}x_i,\qquad k=1,\ldots,N
\end{equation}
where
\[
G_k(\theta):=\sum_{j=1}^{N_c}\int_{I_j(\theta)}
r(x,\theta)\,\partial_{\theta_k}r(x,\theta)\,dx,
\]
and $\partial_{\theta_k}r$ is taken at fixed $x$ within each cell.
Set $G(\theta):=(G_1(\theta),\ldots,G_N(\theta))^T$.
In particular, $\nabla_\theta J=G$ if $r$ is continuous across every
interface.
\end{proposition}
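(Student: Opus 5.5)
The plan is to write $J(\theta)$ as a sum of integrals over the moving cells and differentiate each one with the Leibniz rule for integrals with parameter-dependent limits. Write $r_j(x,\theta)$ for the $C^1$ extension of $r|_{I_j(\theta)}$ up to the endpoints, which the hypothesis provides. Then
\[
J(\theta)=\sum_{j=1}^{N_c}\frac12\int_{x_{j-1}(\theta)}^{x_j(\theta)} r_j(x,\theta)^2\,dx .
\]
Fix $k$ and a base point $\theta$. I will set $\Phi_j(\alpha,\beta,\theta):=\frac12\int_\alpha^\beta r_j(x,\theta)^2\,dx$, defined for $(\alpha,\beta)$ near $(x_{j-1}(\theta),x_j(\theta))$. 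Since $r_j$ is jointly $C^1$ on the closed cell, I will extend it $C^1$ slightly beyond the cell; a local $C^1$ extension in $x$ suffices, for instance by a Whitney extension or by reflection. With this extension, $\Phi_j$ is $C^1$ in all three arguments. Its partial derivatives are $\partial_\beta\Phi_j=\frac12 r_j(\beta,\theta)^2$, $\partial_\alpha\Phi_j=-\frac12 r_j(\alpha,\theta)^2$, and $\partial_{\theta_k}\Phi_j=\int_\alpha^\beta r_j\,\partial_{\theta_k}r_j\,dx$. The last identity holds by differentiation under the integral, justified by continuity of $\partial_{\theta_k}r_j$ on a compact set.

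The chain rule, using that the $x_i(\theta)$ are $C^1$, then gives
\[
\partial_{\theta_k}\frac12\int_{I_j(\theta)}r^2\,dx=\int_{I_j(\theta)}r\,\partial_{\theta_k}r\,dx+\frac12 r(x_j^-)^2\,\partial_{\theta_k}x_j-\frac12 r(x_{j-1}^+)^2\,\partial_{\theta_k}x_{j-1}.
\]
I will sum this over $j$. The first terms add up to $G_k(\theta)$. The boundary terms at the fixed endpoints $x_0=0$ and $x_{N_c}=1$ vanish because $\partial_{\theta_k}x_0=\partial_{\theta_k}x_{N_c}=0$. At each interior interface $x_i$, the term from cell $I_i$ is $+\frac12 r(x_i^-)^2\partial_{\theta_k}x_i$ and the term from $I_{i+1}$ is $-\frac12 r(x_i^+)^2\partial_{\theta_k}x_i$. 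Together they give $-\frac12\jump{r^2}_{x_i}\partial_{\theta_k}x_i$, which is \eqref{eq:sgd-bias}.

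For the final claim, I note that if $r$ is continuous across every interface then $\jump{r^2}_{x_i}=0$, so the interface sum vanishes and $\nabla_\theta J=G$.

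The main obstacle is the bookkeeping on the moving domain. The Leibniz rule requires $r_j$ and $\partial_{\theta_k}r_j$ to make sense slightly outside the current cell when $\theta$ varies. I handle this by the $C^1$ extension described above. Alternatively, one can change variables to a fixed reference cell via $x=x_{j-1}(\theta)+t\,(x_j(\theta)-x_{j-1}(\theta))$, $t\in[0,1]$, and differentiate under the integral there. This avoids any extension, and expanding the Jacobian term reproduces the same endpoint contributions.
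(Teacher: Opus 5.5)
Your proposal is correct and follows the paper's proof: you apply the Leibniz rule on each moving cell, note that the outer endpoints $x_0=0$ and $x_{N_c}=1$ are fixed, and combine the two one-sided contributions at each interior interface into $-\tfrac12\jump{r^2}_{x_i}\partial_{\theta_k}x_i$. Your $C^1$ extension (or reference-cell change of variables) just makes explicit the justification of the Leibniz step, which the paper takes for granted.
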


\begin{proof}
Differentiating the integral over one moving cell gives
\[
\begin{aligned}
\partial_{\theta_k}\left(\frac12\int_{x_{j-1}(\theta)}^{x_j(\theta)}
r(x,\theta)^2\,dx\right)
&=\int_{I_j(\theta)}r\,\partial_{\theta_k} r\,dx\\
&\quad+\frac12r(x_j^-,\theta)^2\partial_{\theta_k} x_j
-\frac12r(x_{j-1}^+,\theta)^2\partial_{\theta_k} x_{j-1}.
\end{aligned}
\]
The outer endpoints are fixed. Summing over the cells combines the
two contributions at each interior interface into
$-\tfrac12\jump{r^2}_{x_i}\partial_{\theta_k} x_i$, while the integral
terms sum to $G_k$. This proves \eqref{eq:sgd-bias}.
\end{proof}

For $r=F(\cdot;u_\theta)$, write
$w_k:=\partial_{\theta_k}u_\theta$ within each cell. 
$$G_k(\theta)=\sum_{j=1}^{N_c}
\bigl\langle r,DF(u_\theta)[w_k]\bigr\rangle_{L^2(I_j(\theta))}$$ 
corresponds to the cell integrals underlying the fixed-partition first
variation in Proposition~\ref{prop:piecewise-adjoint}. When each $w_k$
belongs to $\mathcal V_{\Pi_\theta}$, that proposition decomposes $G_k$
into the adjoint volume term and $\mathcal S_{u_\theta}(r,w_k)$.
The additional term in \eqref{eq:sgd-bias} accounts for motion of
the cell boundaries. In vector form, write
\begin{equation}\label{eq:interface-correction}
\nabla_\theta J=G+K,
\qquad
K(\theta):=-\frac12\sum_{i=1}^{N_c-1}
\jump{r^2}_{x_i}\,\nabla_\theta x_i.
\end{equation}

For independent uniform samples $\xi_1,\dots,\xi_n$ in $(0,1)$, the sampled loss
\eqref{eq:sampled-loss} satisfies
\[
\mathbb E[\hat J(\theta;\xi)]=J(\theta),
\qquad
\mathbb E[\nabla_\theta\hat J(\theta;\xi)]=G(\theta).
\]
Consequently,
\begin{equation}\label{eq:sampled-gradient-bias-vector}
\mathbb E[\nabla_\theta\hat J(\theta;\xi)]=\nabla_\theta J(\theta)
-K(\theta).
\end{equation}
The loss estimator is unbiased, but its gradient has a bias
independent of the sample size. The bias vanishes precisely when
the interface contributions cancel. In particular, continuity of
the residual is sufficient. For a smooth second-order expression
with continuous forcing, globally $C^2$ trial functions have
continuous residuals and therefore give $K=0$ under the proposition's
parameter-regularity assumptions. The proposition itself uses only the
residual and applies equally to higher-order equations satisfying
its hypotheses.

\begin{example}[Zero sampled gradient with nonzero continuous derivative]
\label{ex:zero-sampled-gradient}
For $0<\theta<1$, let $\sigma(z):=(\max\{z,0\})^2$ be the
squared-ReLU activation and consider the network with one hidden
layer and two units,
\[
\Psi_\theta(x)=-\frac12\sigma(x)-\frac12\sigma(x-\theta).
\]
The weights are fixed, and only the bias $-\theta$ varies. For the
Poisson problem $-u''=1$ on $(0,1)$, enforce the homogeneous
Dirichlet data by setting
\[
u_\theta(x)=\Psi_\theta(x)-(1-x)\Psi_\theta(0)-x\Psi_\theta(1).
\]
Each $u_\theta$ is globally $C^1$, piecewise quadratic, and satisfies
$u_\theta(0)=u_\theta(1)=0$. The affine boundary correction has zero
second derivative, so the strong residual and loss are
\[
r(x,\theta)=-u_\theta''(x)-1=\mathbf1_{\{x>\theta\}}
\quad\text{a.e.},
\qquad J(\theta)=\frac{1-\theta}{2}.
\]
At each fixed $x\ne\theta$, the residual is locally constant in
$\theta$, so $G(\theta)=0$. The single interface is $x_1(\theta)=\theta$,
with $\jump{r^2}_{x_1}=1$ and $x_1'(\theta)=1$. Thus
\[
K(\theta)=-\frac12,
\qquad J'(\theta)=G(\theta)+K(\theta)=-\frac12.
\]
For each fixed $\theta$, a finite uniformly sampled point set almost
surely avoids the interface, giving an exactly zero sampled
derivative with respect to $\theta$. Nevertheless, the sampled loss converges almost surely
to $J(\theta)$ as the sample size increases. This discrepancy is
independent of sampling variance or optimizer behavior. The map
$\theta\mapsto u_\theta$ is not $C^1$ into $X=H^2(0,1)\cap H^1_0(0,1)$, on which
$\mathcal R(u)=-u''-1$ is $C^1$ into $L^2(0,1)$, so the hypotheses of
Section~\ref{subsec:parametric-functions} fail and Proposition~\ref{prop:parametric-fredholm}
and Theorem~\ref{thm:linear-projection} do not apply.
\end{example}

\begin{example}[Equations in divergence form]
\label{ex:divergence-traces}
Let $u_\theta\in C^1([0,1])\cap C^4_{\Pi_\theta}$ satisfy fixed
Dirichlet data. Assume that $u_\theta$ and its first two spatial
derivatives are jointly $C^1$ in $x$ and $\theta$ on each cell,
up to the cell endpoints. Consider
\[
F(x;u)=\partial_x f(x,u,u_x)-g(x),
\]
where $f$ is smooth and $g\in C^2([0,1])$. Evaluate $f_u$ and $f_p$
along $u_\theta$. Then
\[
DF(u_\theta)[w]=\partial_x(f_u w+f_p w_x),
\qquad
DF(u_\theta)^*r=(f_p r_x)_x-f_u r_x.
\]
Collect the residual traces and the corresponding traces of a
parameter variation in
\[
\begin{aligned}
\varrho(r)&:=\Bigl(r(0),\,r(1),\,
(\jump{r_x}_{x_i})_{i=1}^{N_c-1},\,
(\jump{r}_{x_i})_{i=1}^{N_c-1}\Bigr),\\
\tau(w)&:=\Bigl(-(f_p w_x)(0),\,(f_p w_x)(1),\,
(f_p(x_i)w(x_i))_{i=1}^{N_c-1},\\
&\hspace{3em}
\bigl(-f_u(x_i)w(x_i)-f_p(x_i)\overline{w_x}_{x_i}\bigr)_{i=1}^{N_c-1}\Bigr).
\end{aligned}
\]
Both vectors have $2N_c$ components. With
$w_k=\partial_{\theta_k}u_\theta$, Proposition~\ref{prop:sgd-bias}
and cellwise integration by parts give
\begin{equation}\label{eq:defect-trace-pairing}
\nabla_\theta J(\theta)
=\left(
\sum_{j=1}^{N_c}
\bigl\langle DF(u_\theta)^*r,w_k\bigr\rangle_{L^2(I_j)}
+\varrho(r)\cdot\tau(w_k)
\right)_{k=1}^{N}.
\end{equation}
The interface velocities cancel explicitly because
\[
\jump{w_{k,x}}_{x_i}
=-\jump{u_{\theta,xx}}_{x_i}\,\partial_{\theta_k}x_i,
\qquad
\jump{r}_{x_i}=f_p(x_i)\jump{u_{\theta,xx}}_{x_i}.
\]
These identities, together with
$\overline r_{x_i}\jump{r}_{x_i}=\tfrac12\jump{r^2}_{x_i}$,
cancel the interface-motion contribution from integration by
parts against $K_k$. Thus interface motion is represented through
the sensitivities $w_k$ and their traces, with no explicit
$\partial_{\theta_k}x_i$ in \eqref{eq:defect-trace-pairing};
this does not require $K_k=0$.

Geometrically, parameter criticality is precisely orthogonality
of $(DF(u_\theta)^*r,\varrho(r))$ to the span of
$(w_k,\tau(w_k))$, $k=1,\ldots,N$, in
$L^2(0,1)\times\mathbb R^{2N_c}$, with the cellwise $L^2$ and
Euclidean inner products. The volume and trace contributions
remain together in this condition and can cancel within each
parameter direction.

\end{example}

The adjoint calculation uses the second-order structure of the
operator. Higher-order equations require the corresponding
higher-order trace terms; the residual-level identity
$\nabla_\theta J=G+K$ is unchanged.

\subsection{{Numerical comparison for the Poisson equation}}
\label{subsec:poisson-comparison}
We use the Poisson experiment of Figure~\ref{fig:elu-results} to
examine how interface bias and sampling variance affect training.
The comparison varies two choices: how $G$ is evaluated, and
whether the interface correction $K$ from
\eqref{eq:interface-correction} is included. Writing $G_Q$ for a
cellwise quadrature approximation of $G$, Table~\ref{tab:poisson-terminal-losses} lists the four vectors supplied
to the optimizer; $G_Q$ approximates the mean sampled gradient and $G_Q+K$ approximates the continuous gradient.

\begin{table}[!htbp]
\centering
\caption{{The four gradients handed to the optimizer in the Poisson comparison, and the terminal residual losses after $20{,}000$ iterations, evaluated by the same cellwise quadrature for all variants. Each loss entry is the range over six seeds, in units of $10^{-3}$.}}
\label{tab:poisson-terminal-losses}
\begin{tabular}{@{}lcccc@{}}
\toprule
 & \multicolumn{2}{c}{Without $K$} & \multicolumn{2}{c}{With $K$} \\
\cmidrule(lr){2-3}\cmidrule(l){4-5}
Gradient evaluation & Vector & Loss ($\times10^{-3}$) & Vector & Loss ($\times10^{-3}$) \\
\midrule
Cellwise quadrature & $G_Q$ & $20$--$42$ & $G_Q+K$ & $0.83$--$5.0$ \\
Uniform sampling, $100$ points & $\nabla_\theta\hat J$ & $45$--$110$ & $\nabla_\theta\hat J+K$ & $1.9$--$13$ \\
\bottomrule
\end{tabular}
\end{table}

Both $G_Q$ and $K$ use the interface locations recomputed at each
iteration, using explicit formulas for the first layer and
numerical root finding for the second to locate the sign changes of the respective layer inputs. To evaluate $K$, we
compute $\jump{r^2}_{x_i}$ from one-sided residual values at each
interface. The sensitivities $\nabla_\theta x_i$ are obtained by
differentiating the explicit first-layer locations and implicitly
differentiating the second-layer root equations. We then assemble
$K$ using \eqref{eq:interface-correction}. In both corrected
variants, this calculation uses the current network parameters
and does not use the random sample points.

The sampled gradients $\nabla_\theta \hat J$ are computed with
$100$ points drawn independently and uniformly at each iteration.
We increase the sample count from the ten points used in
Figure~\ref{fig:elu-results} to reduce sampling fluctuations:
at fixed $\theta$, this reduces the sampling covariance by a
factor of ten while leaving the mean gradient $G$ unchanged.
The two sampled variants use the same sample points; the corrected
variant adds $K$, evaluated separately as described above.

All four variants use the ELU trial family and Adam optimizer of
Figure~\ref{fig:elu-results}, with
$(\beta_1,\beta_2,\varepsilon)=(0.9,0.999,10^{-8})$.
For each of six seeds ($0$--$5$), all variants
start from the same network initialization; Figure~\ref{fig:elu-results}
shows seed $3$. Each run lasts $20{,}000$ iterations, with cosine
decay of the learning rate from $10^{-3}$ to $10^{-5}$:
\[
\eta_t=10^{-5}+\frac{10^{-3}-10^{-5}}{2}
\left(1+\cos\frac{\pi t}{19{,}999}\right),
\qquad t=0,\ldots,19{,}999.
\]

To compute $G_Q$, we partition $[-6,6]$ at the computed interface
locations and subdivide each cell into intervals of length at
most $0.05$. We apply ten-point Gauss--Legendre quadrature on each
subinterval. The same quadrature procedure is used to evaluate
the loss for all four variants.
Figure~\ref{fig:ablation-2x2} shows the resulting loss histories.

\begin{figure}[!htbp]
\centering
\includegraphics[width=0.9\textwidth]{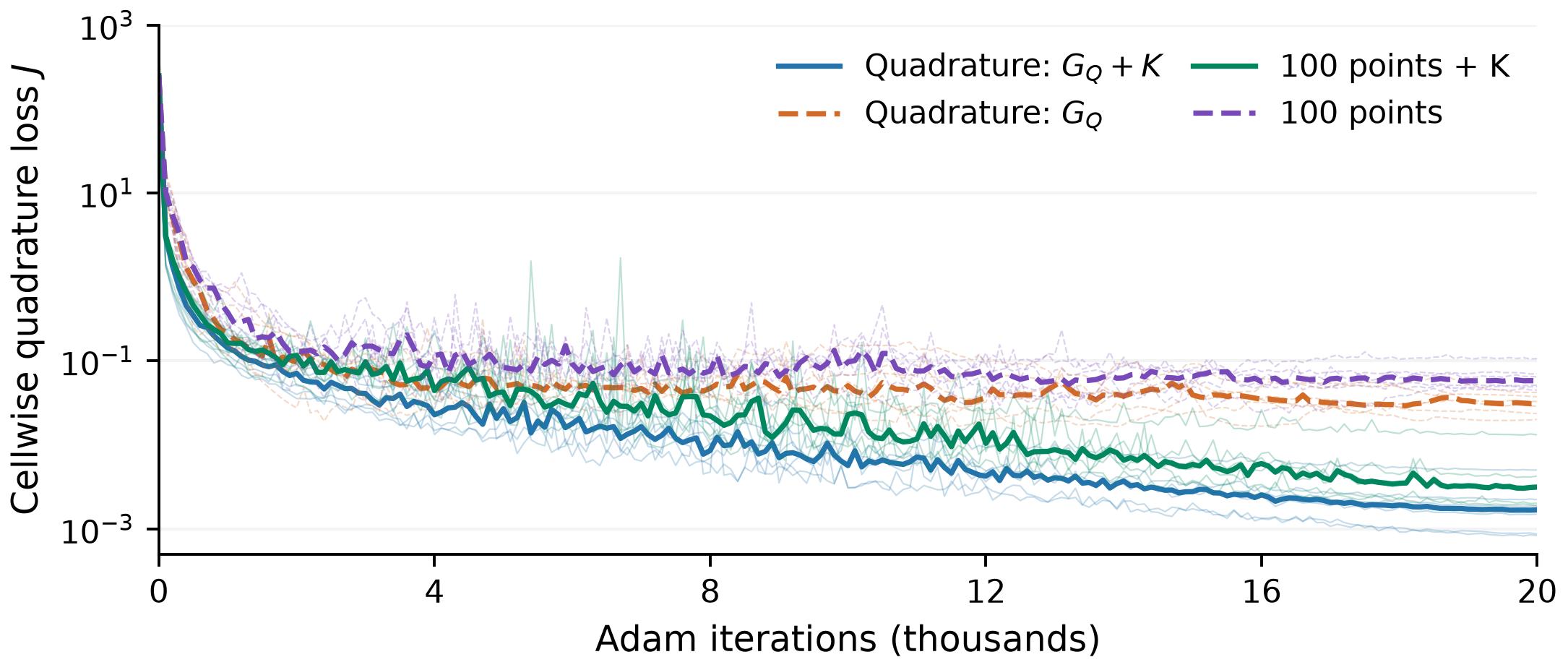}
\caption{{Controlled comparison for the Poisson experiment of
Figure~\ref{fig:elu-results}. Loss histories for the four
gradients handed to the optimizer (thick lines: medians over six seeds;
thin lines: individual seeds), with the same initializations,
optimizer, and cosine learning-rate schedule. The sampled variants
use $100$ points per iteration. Here $G_Q$ denotes the cellwise
quadrature approximation of $G$, and $K$ is the interface correction.}}
\label{fig:ablation-2x2}
\end{figure}

The terminal losses are listed in Table~\ref{tab:poisson-terminal-losses}.
Adding the interface term lowers the loss in every seed, for both
quadrature and sampled gradients. The reduction factors range from
$4.0$ to $49$ with quadrature and from $3.9$ to $34$ with $100$
samples. Replacing sampling by quadrature also lowers the loss
in every seed, with or without the interface correction. Thus the effect of the missing interface term remains
visible after reducing sampling fluctuations.

A separate comparison using ten samples per iteration and the full
$100{,}000$-iteration schedule of Figure~\ref{fig:elu-results}
gives terminal losses lower by factors of $12$--$66$ with the
interface correction than without it, across the six seeds.

In the quadrature runs without the interface term, $|G_Q|$ lies
between $0.035$ and $0.12$, while $|G_Q+K|$ lies between $3.6$ and
$8.2$. The mean sampled gradient is therefore small compared with
the evaluated continuous gradient. In the $100$-point sampled runs
without the interface correction, $|G_Q|$ lies between $1.9$ and
$5.9$, while the omitted term has magnitude $|K|$ between $4.8$
and $13$. The root-mean-square fluctuation of the sampled gradient
about its mean at the terminal iterate,
\[
\sigma_n:=\Bigl(\frac1n\Bigl(|\Omega|\int_\Omega r^2|\nabla_\theta r|^2\,dx-|G|^2\Bigr)\Bigr)^{1/2},
\qquad n=100,
\]
evaluated by the same cellwise quadrature, lies between $8.1$ and
$15.5$, or $0.71$--$1.60$ times $|G_Q+K|$. Sampling fluctuations
thus remain comparable to the continuous gradient despite the
larger sample count. The paired comparisons show that including
$K$ improves the attained loss even with reduced sampling noise;
quadrature further improves it under the stated schedule.

\section{Conclusion}\label{sec:conclusion}

The passage from residual optimization to a solution of a differential
equation involves several distinct implications. For a differentiable
residual map, stationarity requires the residual to be orthogonal to the
image of admissible variations under its linearization. Boundary
conditions and parameterization restrict those variations. A separate
question is whether the mean sampled gradient agrees with the gradient
of the continuous residual functional. For minimizing sequences, one
must determine whether convergent subsequences exist and whether their
limits retain the required regularity and attain the infimum. Even a
function with zero residual must satisfy the
admissibility conditions required of the intended solution.

In the transonic model, every minimizing sequence converges uniformly
to a function outside $H^1$ that is not a weak solution. The spatial
derivatives along the sequence have diverging $L^2$ norms. If bounded
parameters give an $L^2$ bound on these derivatives, approaching the
infimum requires parameter norms to diverge. Moving residual jumps give a
different obstruction: the continuous gradient can contain contributions
absent from the sampled gradient, independently of the sample size.
Under the stated assumptions, continuity of the residual removes this
bias. The favorable results for linear equations, the viscous model,
and uniformly convex Monge--Amp\`ere critical points show how additional
conditions justify the corresponding steps from optimization to a
solution.

For a particular method, a small loss gradient must be interpreted
through the available admissible variations and their conditioning.
Accurate sampled losses do not alone establish that the sampled
gradient agrees with the continuous gradient or control the continuous
residual along a training sequence. An error estimate further requires
stability of the residual formulation and conditions selecting the
intended solution. Increasing the number of training points or the
approximation capacity does not supply these properties by itself.
For equations selected by viscosity admissibility, minimizing the
residual of a consistent monotone discretization is one way to
incorporate that information, provided the discretization and
optimization errors satisfy the stability and refinement requirements
of the convergence analysis~\cite{esteve2025finite,bokanowski2026monotone,barles1991convergence}.

\section*{Acknowledgments}
Carlos Esteve-Yagüe is supported by Agencia Estatal de Investigación under the Ram\'on y Cajal 2022 grant RYC2022-035966-I (Spain).

Richard Tsai is supported partially by National Science Foundation grant DMS-2513857.

Stanley Osher is supported partially by National Science Foundation grants DMS-2208272 and DMS-1554564, and by
DOD-ARO W911NF241015,
DOE DE-SC0026262,
DOD-DARPA HR00112590074.

\bibliographystyle{abbrv} 
\bibliography{references}

\end{document}